\documentclass[11pt]{article}
\usepackage{amsmath, amsthm, amssymb,latexsym,enumerate}
\usepackage{hyperref}\usepackage{caption,subcaption,xcolor}
\usepackage{graphicx,cite}
\textheight 8 true in \textwidth 6.4 true in \voffset -1 true cm
\hoffset -1.7true cm
\numberwithin{figure}{section}
\numberwithin{equation}{section}

\newtheorem{theorem}{Theorem}[section]
\newtheorem{lemma}[theorem]{Lemma}
\newtheorem{claim}[theorem]{Claim}

\newtheorem{conjecture}{Conjecture}

\theoremstyle{definition}
\newtheorem{remark}[theorem]{Remark}

\newcommand{\pc}{\mathrm{pc}_{\mathrm{opt}}}

\begin{document}
\baselineskip 18pt
\title{\bf\vspace{-1.5cm}  The optimal proper connection number of a graph with given independence number}

\author{Shinya Fujita\footnote{Supported by JSPS KAKENHI (No. 19K03603).}\\[1.5ex]
\small School of Data Science, Yokohama City University,\\
\small 22-2, Seto, Kanazawa-ku, Yokohama 236-0027, Japan\\
\small \tt fujita@yokohama-cu.ac.jp\\
[2.5ex]
Boram Park\footnote{Park's work was supported by Basic Science Research Program through the National Research Foundation of Korea (NRF) funded by the Ministry of Science, ICT and Future Planning (NRF-2018R1C1B6003577).}\\[1.5ex]
\small Department of Mathematics, Ajou University,\\
\small Suwon 16499, Republic of Korea \\
\small\tt borampark@ajou.ac.kr
}

\maketitle
\date

\begin{abstract} \normalsize
An edge-colored connected graph $G$ is \textit{properly connected} if between every pair of distinct vertices, there exists a path that no two adjacent edges have a same color.
Fujita (2019) introduced the \textit{optimal proper connection number} $\pc(G)$ for a monochromatic connected graph $G$,  to make a connected graph properly connected efficiently. More precisely, $\pc(G)$ is the smallest integer $p+q$ when one converts a given monochromatic graph $G$ into a properly connected graph by recoloring $p$ edges with $q$ colors.

In this paper, we show that $\pc(G)$ has an upper bound in terms of the independence number $\alpha(G)$. Namely,
we prove that for a connected graph $G$, $\pc(G)\le \frac{5\alpha(G)-1}{2}$. Moreover, for the case $\alpha(G)\leq 3$, we improve the upper bound to $4$, which is tight.\\

\noindent\textbf{Keywords:} Edge-colored graph, Properly connected graph, Optimal proper connection number, Independence number
\end{abstract}

\section{Introduction}\label{sec:Intro}

In this paper, we consider simple and finite graphs.
We refer the reader to \cite{West}
for terminology and notation not defined here.
For a graph $G$, let $\alpha (G)$ be the independence number of $G$ and $\alpha' (G)$ be the size of a maximum matching of $G$.
An edge-colored graph is \textit{properly colored} if no two adjacent edges have a same color. The notion on properly colored graphs sometimes plays an important role in a variety of fields related to the application of graphs. For example, properly colored paths and cycles appear in the context of some problems in genetics \cite{Dorninger, Dorninger2} and social sciences \cite{Chou}.
An edge-colored connected graph $G$ is \textit{properly connected} if between every pair of distinct vertices, there exists a properly colored path.
As described in \cite{Li}, making a connected graph $G$ properly connected is important in view of some real applications for building an efficient communication network.

Motivated by such applications, Borozan et al. \cite{Borozan} introduced a new notion called the \textit{proper connection number} $pc(G)$ of a connected graph $G$, that is, the minimum number of colors needed to color the edges of $G$ to make it properly connected. This notion attracts much attention on both theoretical and practical aspects in graph theory and a lot of work has been done so far in this area of study (see e.g. \cite{BD, F, GL, HLQ, HL, LW}).  Their work mainly explored the minimum number of colors to make $G$ properly connected, but most of them do not provide any information on how to make a connected graph properly connected efficiently. Recently, Fujita \cite{F2019} focused on this point of view and introduced the following notion.

In what follows, by a technical reason, we will firstly assume that a graph $G$ is always connected and all edges of $G$ are colored by color $1$ (thus, we say that $G$ is a \textit{monochromatic} graph). We then consider recoloring some of edges of $G$ so that $G$ is properly connected. Keeping this requirement in mind,
the \textit{optimal  proper  connection  number} of a monochromatic graph $G$, denoted
by $\pc(G)$, is defined as follows:
\begin{eqnarray*}
&& \pc(G)=  \min\left\{ p+q | \text{ By recoloring }p\text{ edges of }G\text{ with }q\text{ new colors}, \right. \\
&&\qquad \qquad  \qquad \qquad \qquad
 \left. G\text{ can be properly connected}\right\} . \end{eqnarray*}

By definition, it is obvious that $\alpha(G)=1$ if and only if $\pc(G)=0$; and moreover, $\pc(G)\neq 1$ holds for any graph $G$.

For integers $m\ge n\ge 2$ with $m+n\ge 9$, Fujita \cite{F2019} showed that $\pc(K_{m,n})=4$ for $n=2,3$ and $\pc(K_{m,n})=5$ for $n\ge 4$, and also showed that $\pc(T)=n-2-\alpha'(T)+\Delta(T)$ for any tree $T$ of order $n\ge 2$.
Very recently, Barish \cite{Barish} showed that computing $\pc(G)$ for a graph $G$ is NP-hard in general.

For graphs with small independence number, we have the following theorem.

\begin{theorem}[{\cite{F2019}}]\label{thm:alpha:2}
For a connected graph $G$ with $\alpha(G)=2$, $\pc(G)\le 3$ and the bound is tight.
\end{theorem}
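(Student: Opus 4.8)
\emph{Setup and reduction.} The hypothesis $\alpha(G)=2$ is equivalent to saying that the complement $\overline{G}$ is triangle-free, and I would use this form throughout. Since every edge of $G$ is colored $1$, an adjacent pair of vertices is automatically joined by a properly colored path (the single edge), so the only pairs I must worry about are the non-adjacent ones, i.e.\ the edges of $\overline{G}$. Because the cost $p+q$ must be at most $3$ and $q\le p$, the task reduces to the following concrete statement: \emph{one can recolor at most two edges, all with a single new color $2$, so that every non-adjacent pair is joined by a properly colored path}. A basic structural fact I would record first is that, for every vertex $v$, the set of non-neighbors of $v$ induces a clique (otherwise three pairwise non-adjacent vertices would appear). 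I then fix a maximum independent set $\{u_0,v_0\}$; it has size $2$ and is also a dominating set, so every other vertex lies in $N(u_0)\cup N(v_0)$. Partition the remaining vertices into $A=N(u_0)\cap N(v_0)$, $B=N(u_0)\setminus N(v_0)$ and $C=N(v_0)\setminus N(u_0)$, and note that $\{u_0\}\cup B$ and $\{v_0\}\cup C$ are cliques (they are closed non-neighborhoods of $v_0$ and $u_0$), so $B$ and $C$ contain no internal non-adjacent pairs.

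\emph{A coverage lemma.} The engine of the argument is the observation that recoloring a single edge $ab$ with color $2$ already resolves many pairs: for every non-adjacent pair $\{x,y\}$ with $x\in N[a]\setminus\{b\}$ and $y\in N[b]\setminus\{a\}$ (here $N[\cdot]$ is the closed neighborhood), one of the paths $x-a-b-y$, $x-a-b$, or $a-b-y$ is properly colored via the pattern $1,2,1$ or one of its truncations. Thus recoloring two edges amounts to choosing two ``products'' $N[\cdot]\times N[\cdot]$ whose union covers all edges of $\overline{G}$, and the plan is to pick these two edges from the star edges at $u_0$ and $v_0$.

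\emph{Main construction and case analysis.} In the generic case $|A|\ge 2$, I pick distinct $w_1,w_2\in A$ and recolor $u_0w_1$ and $v_0w_2$ with color $2$. This immediately handles the three ``boundary'' families: $\{u_0,v_0\}$ via $u_0-w_1-v_0$; each $\{b,v_0\}$ with $b\in B$ via $b-u_0-w_1-v_0$; and each $\{u_0,c\}$ with $c\in C$ via $c-v_0-w_2-u_0$ (the outer edges keep color $1$ because $w_1\ne w_2$). The degenerate cases $|A|=1$ and $A=\varnothing$ are treated separately: when $A=\varnothing$, connectivity forces a ``bridge'' edge $b^\ast c^\ast$ between the cliques $B$ and $C$, and I would recolor edges near this bridge instead; when $|A|=1$ the unique common neighbor plays the role of both $w_1$ and $w_2$, requiring a slightly different second edge so as not to make $u_0-w-v_0$ monochromatic.

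\emph{Main obstacle and tightness.} The hard part is the ``same-side'' non-adjacent pairs — in particular pairs inside $A$, and more generally two vertices with a common neighbor $u_0$ or $v_0$ — because the natural length-$2$ path through $u_0$ (or $v_0$) uses two color-$1$ edges and is not properly colored. Overcoming this is where triangle-freeness of $\overline{G}$ is essential: it sharply limits how two non-adjacent vertices of $A$ can simultaneously miss $w_1$ (or $w_2$), so that a length-$3$ detour such as $a-u_0-w_1-a'$ (pattern $1,2,1$) is available, and it also constrains the choice of $w_1,w_2$ needed to cover every internal non-edge among $A,B,C$. I expect the bulk of the write-up, and the only genuinely delicate reasoning, to be this verification that the two recolored star edges, chosen with the triangle-free structure in mind, cover every edge of $\overline{G}$ in all cases. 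Finally, for tightness I would exhibit $C_5$, which satisfies $\alpha(C_5)=2$: recoloring any single edge leaves some diagonal pair (e.g.\ the two vertices at distance $2$ from the recolored edge) with no properly colored path, so $\pc(C_5)\ge 3$, matching the bound.
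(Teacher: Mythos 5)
First, a point of comparison: the paper does not prove this statement at all --- Theorem~\ref{thm:alpha:2} is quoted from \cite{F2019} --- so there is no in-paper proof to measure your attempt against, and I judge it on its own merits. Your structural setup is correct (non-neighborhoods are cliques; the partition into $A=N(u_0)\cap N(v_0)$, $B$, $C$; recoloring the two star edges $u_0w_1,v_0w_2$ with one new color), and that construction can in fact be completed to a proof. But the verification engine you propose has a genuine gap: you claim that recoloring two edges ``amounts to'' choosing two products $N[\cdot]\times N[\cdot]$ whose union covers $E(\overline{G})$, and you plan to achieve this coverage with star edges at $u_0$ and $v_0$. Product coverage is sufficient but not necessary, and it is in general \emph{impossible} with your prescribed edges. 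Take $V(G)=\{u_0,v_0,w_1,w_2,b_1,b_2,c_1,c_2\}$ with $A=\{w_1,w_2\}$, $B=\{b_1,b_2\}$, $C=\{c_1,c_2\}$, where $\{u_0\}\cup B$ and $\{v_0\}\cup C$ are cliques, $w_1$ is adjacent (besides $u_0,v_0$) exactly to $b_1,c_2$, $w_2$ exactly to $b_2,c_1$, and the only $B$--$C$ edges are $b_1c_2$ and $b_2c_1$. One checks $\overline{G}$ is triangle-free, so $\alpha(G)=2$. For the recoloring $\{u_0w_1,v_0w_2\}$ the nonadjacent pair $\{b_1,c_1\}$ lies in neither product (since $c_1\notin N[w_1]\cup N[u_0]$ and $b_1\notin N[w_2]\cup N[v_0]$), and for the swapped recoloring $\{u_0w_2,v_0w_1\}$ the pair $\{b_2,c_2\}$ fails the same way; so neither admissible choice of your two star edges achieves product coverage.

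The missing idea is that a properly colored path may use \emph{both} recolored edges, with color pattern $1,2,1,2,1$: in the example above, $b_1,u_0,w_1,v_0,w_2,c_1$ is properly colored and joins the uncovered pair. Once such length-five paths are admitted, your construction works for \emph{any} distinct $w_1,w_2\in A$, with no delicate choice at all: pairs inside $A$ and pairs meeting $\{u_0,v_0\}$ are handled by $1,2,1$ paths exactly as you describe, while for a nonadjacent pair $b\in B$, $c\in C$, either $w_1\sim c$ (use $b,u_0,w_1,c$), or $w_2\sim b$ (use $b,w_2,v_0,c$), or else $\alpha(G)=2$ forces $w_1\sim b$ and $w_2\sim c$ and the length-five path applies; $A$--$B$ and $A$--$C$ pairs are analogous. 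Your degenerate cases are fine as sketched (for $A=\emptyset$, recoloring the single bridge edge $b^*c^*$ even gives $\pc(G)\le 2$; for $A=\{w\}$, recolor $u_0w$ and $v_0c^*$ for any $c^*\in C$). Finally, a small slip in the tightness example: in $C_5=v_1v_2v_3v_4v_5v_1$ with $v_1v_2$ recolored, the pair of vertices next to the recolored edge, $\{v_3,v_5\}$, \emph{is} joined by the properly colored path $v_3,v_2,v_1,v_5$; the pairs that actually fail are $\{v_1,v_4\}$ and $\{v_2,v_4\}$, an endpoint of the recolored edge together with the vertex opposite it. The conclusion $\pc(C_5)\ge 3$, hence tightness, still stands.
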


In this paper, we generalize this result for graphs with a larger independence number. Indeed, we show that the optimal proper connection number of a graph $G$ has an upper bound in terms of the independence number of $G$ as follows.

\begin{theorem}\label{thm:main}
For a connected graph $G$, $\pc(G)\le \frac{5\alpha(G)-1}{2}$.
\end{theorem}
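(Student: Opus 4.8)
The plan is to use the independence number twice: once to extract a sparse spanning backbone, and once to guarantee short properly-colored detours. Write $k=\alpha(G)$. First I would invoke the Gallai--Milgram theorem to partition $V(G)$ into at most $k$ vertex-disjoint paths $P_1,\dots,P_r$ (with $r\le k$); since $G$ is connected these paths can be linked through connecting edges into a single spanning connected subgraph $H$ whose skeleton (after contracting each $P_i$) is a tree on $r$ nodes. The point of this step is to turn the independence hypothesis into a concrete backbone along which a small palette of new colors can be laid, while the overwhelming majority of edges are left with color $1$.

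The accounting I would aim for is inductive, decreasing $\alpha$ in steps of two: the target satisfies $\frac{5k-1}{2}-\frac{5(k-2)-1}{2}=5$, and Theorem~\ref{thm:alpha:2} together with the trivial case $\alpha=1$ supplies the base (namely $\pc\le3$ and $\pc=0$, both within the claimed bound). So it suffices to process the backbone two paths at a time, spending at most $5$ in recolored edges plus new colors to merge a pair $P_i,P_j$ into the already-handled part while preserving proper connection. Concretely, for each such pair I would recolor only a bounded number of edges near the path-endpoints and at the connectors, introducing at most a couple of new colors, so that a properly-colored ``highway'' threads through all the endpoints; cross-path pairs of vertices are then joined by short properly-colored paths that enter and leave this highway.

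The genuine difficulty is proper connection for two vertices lying far apart on the \emph{same} backbone path $P_i$: the sub-path of $P_i$ between them is a long monochromatic color-$1$ run, so it is useless, and a total budget of $O(\alpha)$ recolorings forbids breaking up these runs edge-by-edge. Here I would use the independence hypothesis a second time. Because $G$ has no independent set of size $k+1$, along any long induced path there must be chords, and more generally any two vertices of $G$ are joined by a path of length at most $3k-1$; the crux is to upgrade this into a properly-colored detour of \emph{bounded} length that reuses the few recolored edges of the highway. I expect this to be the main obstacle, and I would attack it by choosing the path partition and the connecting edges so that every vertex sits in a short, chord-rich neighbourhood of some endpoint on the highway, reducing every intra-path pair to a length-$2$ or length-$3$ proper path through a single recolored edge.

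Finally I would verify the exact coefficient by a careful parity bookkeeping: handling the paths in pairs lets two consecutive merges share colors, which is what produces the alternating $+2,+3$ increments matching $\lfloor\frac{5\alpha-1}{2}\rfloor$, and I would check the leftover case (an odd path left over when $r$ is odd, or $r<k$) separately against the base cases, so that no pair of vertices---within a path, across a pair, or across the whole backbone---is left without a properly-colored path.
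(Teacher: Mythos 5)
There is a genuine gap here, and you have named it yourself: your entire scheme recolors only $O(\alpha)$ edges on a spanning backbone, and you have no mechanism for properly connecting two vertices that lie in the interior of the same long path $P_i$. The ``upgrade'' you hope for --- turning chords guaranteed by $\alpha(G)\le k$ into bounded-length properly colored detours that reuse the few recolored highway edges --- is not a technical loose end, it is the whole problem: a vertex deep inside $P_i$ may have all of its incident edges colored $1$, and any properly colored path leaving it must immediately alternate away from color $1$, which requires a recolored edge adjacent to its first edge; nothing in the Gallai--Milgram partition or in your choice of connectors guarantees every vertex such an escape, and arranging it vertex-by-vertex would blow the $O(\alpha)$ budget. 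The inductive accounting is likewise unsupported: you never exhibit an operation that removes two paths while decreasing $\alpha$ by exactly $2$ (deleting paths from the partition can leave $\alpha$ unchanged), nor a merge step that costs at most $5$ and preserves proper connection, so the ``$+2,+3$'' bookkeeping has nothing to attach to.

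The paper's proof avoids your difficulty precisely by \emph{not} using a spanning structure. By Lemmas~\ref{lem:folk} and~\ref{lem:a:minimal} one finds a connected \emph{induced} subgraph $H$ with $\alpha(H)=\alpha(G)$ and $|V(H)|\le 2\alpha(G)-1$, and only a spanning tree $T$ of this small core is recolored (properly edge-colored with $\Delta(T)$ new colors). Because $H$ contains a maximum independent set $I$ of $G$, every vertex $u$ outside $H$ has a neighbor in $I$; hence the path consisting of the single color-$1$ edge from $u$ into $H$ followed by a tree path, all of whose edges carry new colors, is automatically properly colored, and no vertex outside the core ever needs a recolored edge near it. This already gives $\pc(G)\le |E(T)|+\Delta(T)\le 3\alpha(G)-2$ (Remark~\ref{rmk1}); the improvement to $\frac{5\alpha(G)-1}{2}$ comes not from induction on $\alpha$ but from a case analysis on $\Delta(T)$, using Lemmas~\ref{lem:unique:max} and~\ref{lem:tree:components} together with the saving term $|M|$ in Lemma~\ref{lem:key}. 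If you want to salvage your approach, the lesson is that the recolored structure must simultaneously have size $O(\alpha)$ and contain a maximum independent set of $G$; a spanning path system cannot do both.
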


Along this line, we propose the following conjecture.

\begin{conjecture}\label{conj:general}
For a connected graph $G$ with $\alpha(G)\ge 3$, $\pc(G)\le 2\alpha(G)-2$.
\end{conjecture}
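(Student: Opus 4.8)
The plan is to prove the bound by induction on $\alpha(G)$, exploiting the fact that the permitted budget grows by exactly $2$ each time the independence number increases by $1$. The base case is $\alpha(G)=3$, which is precisely the sharpened bound $\pc(G)\le 4=2\cdot 3-2$ stated for $\alpha\le 3$; note that $\alpha=2$ must be excluded as a base, since there $2\alpha-2=2$ would contradict the tight bound $\pc\le 3$ of Theorem~\ref{thm:alpha:2}, and this is exactly why the conjecture is restricted to $\alpha\ge 3$. Thus all of the content lies in an inductive step that converts one unit of independence number into at most two units of recoloring cost, improving the amortized rate of Theorem~\ref{thm:main} (which spends $5$ units per two units of $\alpha$) down to $4$ units per two units of $\alpha$.

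A convenient structural tool is the observation that if $v$ lies in some maximum independent set of $G$, then the subgraph induced by the non-neighbours of $v$ has independence number exactly $\alpha(G)-1$; iterating this greedily produces a maximum independent set $\{v_1,\dots,v_k\}$, $k=\alpha(G)$, that is also dominating, and hence a partition of $V(G)$ into $k$ stars, the $i$-th centred at $v_i$ and consisting of vertices first dominated by $v_i$. The natural inductive step is then to locate a reducing gadget: a vertex set $W$ (ideally a clique, or a single near-simplicial vertex together with a connector) such that $\alpha(G-W)=\alpha(G)-1$, such that $G-W$ remains connected, and such that $W$ attaches to $G-W$ densely enough to be reinserted for at most $2$ extra units. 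One would apply the induction hypothesis to colour $G-W$ at cost $\le 2\alpha(G)-4$, and then recolour a bounded number of edges between $W$ and $G-W$ (and inside $W$) so as to install proper entry/exit routes: since $W$ is clique-like and its neighbours dominate, most pairs are already joined by a direct edge or a length-two path, and only the ``seam'' pairs need new colours.

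The main obstacle is that these requirements pull against each other. Forcing $\alpha(G-W)=\alpha(G)-1$ means $W$ must meet every maximum independent set, whereas cheap reattachment wants $W$ to be a clique joined densely to the rest; reconciling these, while keeping $G-W$ connected, is exactly the open difficulty. A concrete manifestation appears already in the star partition above: two leaves of the same star that happen to be non-adjacent are linked only by the monochromatic length-two path through the centre, and repairing all such pairs across all stars is what drives the cost. The asymmetry here is sharp, because for $v$ in a maximum independent set the non-neighbour side is genuinely simpler (its independence number drops), but the neighbour side $N(v)$ can itself carry independence number close to $\alpha(G)$, so a naive peel of a single vertex does not telescope.

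As a complementary route that sidesteps the induction, I would also try to realise the bound by a single global colouring: build a spanning caterpillar on the star partition by ordering $v_1,\dots,v_k$ so that each star has an edge to an earlier one, and then $2$- or $3$-colour a carefully chosen spanning subgraph so that the total number of recoloured edges plus new colours is at most $2\alpha(G)-2$. The path family calibrates the target, since $\pc(P_n)=\alpha(P_n)$ shows the order is right and leaves slack, while the extremal graph behind the tight value $4$ at $\alpha=3$ indicates where all slack is consumed; confirming that some clique-blow-up or subdivision family actually attains $2\alpha-2$ would both verify tightness and reveal which local repairs are forced, thereby guiding the amortized accounting in the inductive step. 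I expect the decisive step to be a lemma guaranteeing a reducing gadget $W$ with a cost-$2$ reattachment, and it is here that a full proof of the conjecture would have to be supplied.
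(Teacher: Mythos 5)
The statement you are addressing is Conjecture~\ref{conj:general}, which the paper explicitly leaves open: the paper proves only the case $\alpha(G)=3$ (Theorem~\ref{thm:alpha:3}) and the weaker general bound $\frac{5\alpha(G)-1}{2}$ (Theorem~\ref{thm:main}), so there is no proof in the paper to match yours against. Your proposal is likewise not a proof, and you say so yourself: the entire inductive step rests on an unproved ``reducing gadget'' lemma, namely a set $W$ with $\alpha(G-W)=\alpha(G)-1$, $G-W$ connected, and reattachment cost at most $2$. The tension you identify (such a $W$ must hit every maximum independent set, while cheap reattachment wants $W$ clique-like and densely joined) is real, but no construction or existence argument is supplied, so the gap is exactly where you place it. Your base case is sound (Theorem~\ref{thm:alpha:3} gives $\pc(G)\le 4=2\cdot 3-2$, and the exclusion of $\alpha=2$ is forced by the tightness in Theorem~\ref{thm:alpha:2}), and your observation that $\alpha(G-N[v])=\alpha(G)-1$ for $v$ in a maximum independent set is correct; note also that the paper already exhibits the extremal family you seek, since $\pc(K_{1,m})=2m-2=2\alpha(K_{1,m})-2$, so no clique-blow-up search is needed to verify tightness.

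Beyond the admitted missing lemma, there is a second obstruction your sketch does not address and which likely dooms any naive induction of this shape: the quantity $\pc$ is defined by an existential minimum over colorings, so the induction hypothesis hands you \emph{some} optimal recoloring of $G-W$ with no control over which colors appear on edges at the attachment vertices. Proper connectivity does not compose: a properly colored $u$--$v$ path in $G-W$ need not begin or end with a color compatible with the seam edges into $W$, so ``installing entry/exit routes'' for $2$ extra units cannot be justified from the hypothesis alone; you would need a strengthened inductive statement (for instance, control of colors incident to a prescribed vertex, in the spirit of the strong proper connection used by Borozan et al.). This compositional failure is precisely why the paper's actual machinery is global rather than inductive: it finds a small core $H$ with $\alpha(H)=\alpha(G)$ and $|V(H)|\le 2\alpha(G)-1$ (Lemma~\ref{lem:folk}), properly colors a spanning tree of $H$ all at once, and routes every outside vertex through it (Lemma~\ref{lem:key}), with the savings set $M$ of seam edges doing the amortized accounting --- and even this only reaches $\frac{5\alpha(G)-1}{2}$. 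Your global caterpillar alternative is closer in spirit to the paper's method, but as stated it is a direction, not an argument; the conjecture remains open on both routes.
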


The bound on $\pc(G)$ is best possible if the conjecture is true. To see this, consider a star $K_{1,m}$ and note that $\pc(K_{1,m})=2m-2=2\alpha(K_{1,m})-2$.
Our second result is to show that the conjecture is true for connected graphs with independence number three.

\begin{theorem}\label{thm:alpha:3}
For a connected graph $G$ with $\alpha(G)=3$, $\pc(G)\le 4$.
\end{theorem}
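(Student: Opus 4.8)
The plan is to combine the structural restrictions forced by $\alpha(G)=3$ with an explicit recolouring scheme whose total cost is at most $4$, i.e. either recolouring two edges with two new colours ($p=q=2$) or three edges with one new colour ($p=3,q=1$). Two elementary facts drive everything. First, a maximal independent set $\{a,b,c\}$ realizing $\alpha(G)=3$ is dominating, so every vertex lies in $N[a]\cup N[b]\cup N[c]$. Second, for any vertex $v$ the non-neighbourhood $W_v=V(G)\setminus N[v]$ satisfies $\alpha(G[W_v])\le 2$, since an independent set of $W_v$ together with $v$ stays independent in $G$; this is what lets me feed local sub-configurations into Theorem~\ref{thm:alpha:2}. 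Since two adjacent vertices are always joined by the (length-one, hence properly coloured) edge between them, the whole task reduces to producing a properly coloured path between every pair of \emph{non-adjacent} vertices, and $\alpha(G)=3$ guarantees there are relatively few obstructions to control.

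The main engine will be a two-hub colouring. I would first dispose of the degenerate cases — the presence of a universal vertex, or $G$ being close to a star (which includes the extremal graph $K_{1,3}$, on which the bound $4$ is attained) — by direct ad hoc recolourings. In the generic case I would select an edge $xy$ whose endpoints dominate as much of $G$ as possible, recolour $xy$ with colour $2$, and split the remaining vertices into those adjacent to $x$ only, to $y$ only, or to both. Keeping all edges into these neighbourhoods of colour $1$, every \emph{cross} pair — one vertex attached to $x$, the other attached to $y$, or both attached to both hubs — is then joined by a path $u$--$x$--$y$--$w$ coloured $1,2,1$, which is properly coloured. With one recolouring ($p=q=1$) this already settles all pairs except those lying inside a single private side; any vertices not dominated by $\{x,y\}$ get absorbed into the case analysis using the third vertex of $\{a,b,c\}$.

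The hard part, and the place where the tight budget really bites, is handling the pairs of non-adjacent vertices that share a hub but sit on the same private side: these form the star-like configurations around $x$ (or $y$), exactly the situation in which $K_{1,3}$ forces $\pc(G)=4$ and leaves no slack. Here I would exploit the restricted independence number of a private neighbourhood — at most $3$ in general, but provably at most $2$ once one accounts for how the other hub and the third vertex of a maximum independent set interact with it — and invoke Theorem~\ref{thm:alpha:2} to understand its internal proper connectivity, then argue that the one or two recolourings still left in the budget can be placed on a single further edge (or a short path) so as to properly connect both private sides simultaneously while leaving the cross paths $1,2,1$ intact. The crux will be showing that one \emph{global} assignment of at most two extra colours suffices across all sub-cases, rather than a fresh colour per troublesome star; I expect the argument to break into a modest number of cases according to how $\{a,b,c\}$ meets $N(x)$, $N(y)$ and $W$, with the extremal analysis pinned against $K_{1,3}$ to confirm that $4$ is best possible.
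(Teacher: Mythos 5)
Your proposal is a plan rather than a proof: the step you yourself call ``the crux'' --- showing that one global assignment of at most two extra colours works across all sub-cases --- is exactly the content of the theorem, and nothing in the proposal carries it out. Moreover, the two concrete tools you name do not work as stated. First, invoking Theorem~\ref{thm:alpha:2} on a private side $P_x$ cannot fit the budget: its conclusion costs up to $3$, which together with the cost $2$ of recolouring the hub edge already exceeds $4$; and $G[P_x]$ need not even be connected, so the theorem need not apply. You acknowledge this but offer no substitute argument. Second, the hub scheme only settles cross pairs; the pairs inside one private side and, worse, the vertices dominated by neither hub are precisely the hard cases, and you give no rule for where the remaining one or two recoloured edges must go. Already for $G=C_6$ (vertices $1,\dots,6$, hubs $1,2$), every failing pair involves the undominated vertices $4,5$, and after colouring the hub edge $12$ with colour $2$ the only way to finish within budget is to recolour the ``far'' matching edges $34$ and $56$ --- a placement that your description (``a single further edge or a short path'') does not predict and that no local reasoning about the hubs produces.

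This is where your route diverges essentially from the paper's. The paper first proves a configuration lemma (Lemma~\ref{prop:config}): $\pc(G)\le 4$ holds if $V(G)$ partitions into three cliques, or into four cliques joined by a matching of size three, or if $G$ contains a special $6$-cycle or one of six subgraphs $H_1$--$H_6$ anchored on an independent triple; note that the winning recolourings there are matchings spread across the graph (such as $x_1y_1,x_2y_2,x_3y_3$ on the $6$-cycle), not edges clustered near a dominating edge. It then uses the $\alpha$-minimal machinery (Lemma~\ref{lem:folk}) to extract a connected induced subgraph on at most $5$ vertices with independence number $3$ --- hence an induced copy of one of three small trees --- and runs a case analysis showing that a connected graph with $\alpha=3$ avoiding all four configurations cannot exist. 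To complete your plan you would have to discover and prove an equivalent classification of which global clique partitions and matchings are always available; that classification, not the hub colouring, is where all the work of the theorem lies.
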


In the proofs of our results, we utilize an extremal structure of graphs
with a fixed independence number.
A connected graph  $G$ is $\alpha$-\textit{minimal} if for every  $v\in V(G)$,
either $v$ is a cut-vertex of $G$ or $\alpha(G-v)<\alpha(G)$.
We prove that every $\alpha$-minimal graph $G$ has at most $2\alpha(G)-1$ vertices
(see Lemma~\ref{lem:folk}),
where the bound on $|V(G)|$ is best possible. To see this, consider the case $G$ is a path with odd number of vertices. As we will observe later, this lemma is very useful in our problem, and we believe that the lemma would also be useful for some other extremal problems in graphs with given independence number. As examples of such extremal problems, we briefly introduce some of them: We start with a famous structural result due to Chv\'atal and Erd\H{o}s  \cite{CE}. Their theorem states that any graph $G$ such that $\alpha(G)\le \kappa(G)$ contains a Hamiltonian cycle. The bound on $\alpha(G)$ is tight, but motivated by this theorem,
O et. al.\cite{O} gave the tight upper bound $\frac{\kappa(G)(|V(G)|+\alpha(G)-\kappa(G))}{\alpha(G)}$ on the order of longest cycles in a graph $G$. Aside from the connectivity of a graph, Egawa et. al. \cite{E2007} asked the maximum order $f(k,a)$ of a graph $G$ with $\alpha(G)\le a$ and with no $k$ vertex-disjoint cycles and they determined it for the case $1\le a\le 5$ or $1\le k\le 2$. Later, Fujita \cite{F2012} settled the case $k=3$ in this extremal problem. Along a slightly different line, Fujita et. al. \cite{F2018} investigated the least value $g(l,a)$ such that any graph $G$ of order at least $g(l,a)$ with $\alpha(G)\le a$ contains an $l$-connected subgraph of order at least $\frac{|V(G)|}{a}$ and they determined it for the cases $a=2,3$. Although we will not introduce further known results anymore, there are many directions to explore the extremal structure of graphs with given independence number. As we can learn from the above research progress, our standard approach to these extremal problems is to work on the partial problems on graphs with small independence number as a first step.
Doing this way, we might obtain some nice clue to settle the cases (ideally, all the cases) on graphs with a larger fixed independence number.
Indeed, Lemma~\ref{lem:folk} can be such a clue to some degree (see Remark~\ref{rmk1}).
We certainly present Theorem~\ref{thm:alpha:3} along the natural approach and it could contribute to providing some nice observation towards some other extremal problems on graphs $G$ with a fixed independence number.

The paper is organized as follows. Section 2 collects key observations for the proofs.
In Section 3, we prove of Theorem~\ref{thm:main}. We prove Theorem~\ref{thm:alpha:3} in Section 4.

\section{Preliminaries}

Here are some definitions and notation.
We denote $\{1,\ldots,n\}$ by $[n]$. Let $G$ be a graph.
For $A,B\subset V(G)$, $E_G(A,B)$ is the set of edges joining a vertex in $A$ and a vertex in $B$. For a vertex $x\in V(G)$, let $N_G(x)=\{y\in V(G)|\ xy\in E(G)\}$.
For $S\subset V(G)$, $G[S]$ and $G-S$ are the subgraph induced by $S$ and $V(G)-S$, respectively. If $S=\{v\}$, then we denote $G-S$ by $G-v$. For an edge $uw$ of $G$, $G-uw$ means the graph obtained from $G$ by deleting $uw$. For a subgraph $H$ of $G$, when $uv\in E(G)\setminus E(H)$ for two vertices $u,v$ of $H$, $H+uw$ means the graph obtained by adding $uw$ to $H$.

For a tree $T$ and  $x,y\in V(T)$, let $P_T(x,y)$ denote the path in $T$ from $x$ to $y$. For two vertex-disjoint paths $P$ and $Q$ of a graph $G$, if the last vertex of $P$ is adjacent to the first vertex of $Q$, then $P+Q$ denotes the path along $P$ and $Q$.

\begin{lemma}\label{lem:folk}
For an $\alpha$-minimal  graph $G$, $|V(G)|\le 2\alpha(G)-1$.
\end{lemma}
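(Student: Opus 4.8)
The plan is to reformulate $\alpha$-minimality in terms of a handful of structural facts and then induct on $|V(G)|$. Write $\alpha=\alpha(G)$ and call a vertex \emph{essential} if it lies in every maximum independent set, equivalently if $\alpha(G-v)<\alpha(G)$. The definition says exactly that every vertex is a cut-vertex or essential, so in particular every non-cut-vertex is essential. Two consequences are worth recording first. Since all essential vertices lie in a common maximum independent set, they form an independent set; hence the non-cut-vertices form an independent set. Combining this with the fact that each block is $2$-connected (minimum degree at least $2$ unless the block is a single edge), I would show that every \textbf{leaf block} of $G$ is a $K_2$: in a leaf block $B$ with cut-vertex $c$, every vertex of $B-c$ is non-cut, hence these vertices are pairwise non-adjacent, so each of them has at most one neighbour (namely $c$) inside $B$, forcing $B=K_2$. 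Thus $G$ has a pendant vertex whenever it has a cut-vertex.

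For the base case, $\alpha=1$ forces $G=K_1$. For $\alpha\ge 2$ the graph has a cut-vertex, hence a pendant. I would then choose the cut-vertex cleverly: rooting the block--cut tree and letting $c$ be the cut-vertex of a \textbf{deepest} leaf block guarantees that every block incident to $c$ except one is a pendant edge. Hence $G-c$ splits into pendant vertices $u_1,\dots,u_t$ (with $t\ge 1$) together with at most one further connected component $D$. Because $c$ is adjacent to a pendant it is non-essential, so $\alpha(G-c)=\alpha$; since $\alpha(G-c)=t+\alpha(D)$, this gives $\alpha(D)=\alpha-t$. If $D$ is empty then $G$ is a star $K_{1,t}$ with $t\ge 2$ and the bound is immediate, so I may assume $D$ is a nonempty connected graph.

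The counting is then engineered so that the target falls out: assuming $|V(D)|\le 2\alpha(D)-1$ one obtains
\[
|V(G)| \;=\; 1+t+|V(D)| \;\le\; 1+t+\bigl(2(\alpha-t)-1\bigr) \;=\; 2\alpha-t \;\le\; 2\alpha-1 ,
\]
using $t\ge 1$. So everything reduces to applying the inductive hypothesis to $D$, i.e.\ to checking that $D$ is itself $\alpha$-minimal. The main leverage here is that, because $u_1,\dots,u_t$ are essential, one can show that every maximum independent set of $G$ meets $D$ in a maximum independent set of $D$; this yields the correspondence ``$x$ essential in $G$ $\iff$ $x$ essential in $D$'' for $x\in V(D)$, which together with the (easy) correspondence of cut-vertices lets me transfer a violation of $\alpha$-minimality inside $D$ back to a violation in $G$.

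I expect the genuine obstacle to be the attachment of $D$ to $c$. When $c$ has at least two neighbours in $D$ (the connecting block is $2$-connected) the two correspondences are clean for every vertex of $D$ and the transfer goes through. The delicate situation is when $c$ attaches to $D$ through a \emph{single} vertex $v$ (a bridge): such a $v$ is always a cut-vertex of $G$, so $\alpha$-minimality could fail inside $D$ exactly at $v$ without being detected in $G$. Resolving this seems to require a case split on $t$. When $t=1$ one shows that $v$ being non-cut and non-essential in $D$ forces the pendant $u_1$ to be non-cut and non-essential in $G$ (indeed $G-u_1$ is $D$ with a pendant re-attached at $v$, whence $\alpha(G-u_1)=\alpha$), contradicting $\alpha$-minimality of $G$; for larger $t$ one must instead absorb the resulting off-by-one into the count above, which appears to need a slightly strengthened inductive statement tracking the lone exceptional vertex. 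Making this endgame fully watertight, rather than the clean reduction and counting, is where I expect the real work to lie.
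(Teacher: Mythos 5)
Your strategy (block decomposition plus induction on the order) is genuinely different from the paper's, and most of it is sound: the reformulation via essential vertices, the fact that every leaf block of an $\alpha$-minimal graph is a pendant edge, the choice of a deepest leaf block, the identities $\alpha(G-c)=\alpha(G)$ and $\alpha(D)=\alpha(G)-t$, the final count, and the transfer of $\alpha$-minimality to $D$ both when $c$ has at least two neighbours in $D$ and in the bridge case with $t=1$. But the difficulty you flag for $t\ge 2$ with a bridge attachment is a genuine gap, not a loose end: in that case $D$ really can fail to be $\alpha$-minimal, so the inductive hypothesis, as you have stated it, cannot be applied. Concretely, let $D$ consist of $K_{2,2}$ with parts $\{v,m\}$ and $\{w_1,w_2\}$ together with a pendant vertex $p$ attached to $m$, and let $G$ be obtained from $D$ by adding a vertex $c$ adjacent to $v$ and two pendant vertices $u_1,u_2$ adjacent to $c$. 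Then $G$ is $\alpha$-minimal with $\alpha(G)=5$: the vertices $c,v,m$ are cut-vertices of $G$, and deleting any one of $u_1,u_2,w_1,w_2,p$ decreases the independence number. If you root the block--cut tree at the block $\{m,p\}$, then $\{c,u_1\}$ is a deepest leaf block, your construction selects this very $c$, and it produces $t=2$ and exactly this $D$; yet $v$ is neither a cut-vertex of $D$ nor essential in $D$, since $D-v$ is a connected star with $\alpha(D-v)=3=\alpha(D)$. So $D$ is not $\alpha$-minimal and the proof stops here.

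To repair this you would need precisely the strengthened inductive statement you gesture at --- something like ``if every vertex of a connected graph, except possibly one non-cut-vertex, is a cut-vertex or essential, then $|V(D)|\le 2\alpha(D)$'' --- and that statement requires its own inductive proof; it is not routine, because when you run the decomposition inside such a $D$, the exceptional vertex and the new attachment vertex need not coincide, so the hypothesis does not obviously propagate. (In the example above one can rescue matters by rooting at $\{c,u_1\}$ instead, which makes $t=1$, but your proof offers no argument that a good rooting always exists, and establishing that would be comparable work.) For contrast, the paper sidesteps this entirely by inducting on $\alpha(G)$ rather than on the order: it fixes a maximum independent set $I$ and chooses $x\notin I$ so as to minimize the order of the smallest component of $G-x$, which forces that smallest component to be a single vertex of $I$, and the induction is then applied to the remaining components after a small surgery, with no exceptional-vertex issue arising.
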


\begin{proof}
We use induction on $\alpha(G)$. If $\alpha(G)=1$ then $G$ is $K_1$ and so it is clear.
Suppose that the lemma holds for every $\alpha$-minimal connected graph with independence number less than $m$ for some $m\ge 2$.
Let $G$ be an $\alpha$-minimal connected graph with independence number $m$. Suppose to the contrary that  $|V(G)|\ge 2\alpha(G)$.
Let $I$ be a maximum independent set of $G$.
Take a vertex $x\in V(G)\setminus I$ so that the order of a minimum component of $G-x$ is as small as possible.
Let $G_1$, $G_2$, $\ldots$, $G_k$ ($k\ge 2$) be the components of $G-x$.
We may assume that  $|V(G_1)|\le |V(G_2)|\le \cdots \le |V(G_k)|$.
If $G_1$ contains a vertex $y\in  V(G)\setminus I$, then $G-y$ has a smaller component than $G_1$. To see this, note that the component of $G-y$ containing $x$ has at least $|V(G)\setminus V(G_1)|$ vertices, meaning that other components of $G-y$ have less than $|V(G_1)|$ vertices, respectively. However, this is a contradiction to the choice of $x$. Thus $V(G_1)=\{v\}$ for some vertex $v\in I$. Note that $G-v$ is connected.

\begin{claim}\label{claim:minimal}
If $|V(G_i)|\ge 2\alpha(G_i)$ for some $i\in \{2,\ldots,k\}$,
then there is a vertex $v_i\in V(G_i)$ such that $N_G(x)\cap V(G_i)=\{v_i\}$, $G_i-v$ is connected, $\alpha(G_i-v_i)=\alpha(G_i)$, and $\alpha(G-v_i)=\alpha(G)$.
\end{claim}

\begin{proof}
Suppose that there exists $i\in\{2,\ldots, k\}$ such that $|V(G_i)|\ge 2\alpha(G_i)$.
Then, by the induction hypothesis, $G_i$ is not $\alpha$-minimal.
Hence there exists a vertex $v_i\in V(G_i)$ such that $G_i-v_i$ is connected and $\alpha(G_i-v_i)=\alpha(G_i)$.
By taking a maximum independent set $I'_i$ of $G_i-v_i$,  $I'=(I-V(G_i)) \cup  I'_i$ is a maximum independent set of $G-v$, since $|I'|=|(I-V(G_i)) \cup  I'_i|=\sum_{i\in [k]} \alpha(G_i)=\alpha(G)$. Thus $\alpha(G-v_i)=\alpha(G)$.
By the $\alpha$-minimality of $G$, $G-v_i$ is disconnected, and so $N_G(x)\cap V(G_i)=\{v_i\}$. Therefore the claim holds.
\end{proof}

\begin{claim}\label{claim:2a-lem}
$|V(G_i)|=2\alpha(G_i)$ for every $i\in\{2,\ldots,k\}$.
\end{claim}
\begin{proof}
Suppose that $|V(G_i)|\ge 2\alpha(G_i)$ for some $i\in\{2,\ldots,k\}$.
By Claim~\ref{claim:minimal}, there is a vertex $v_i\in V(G_i)$ such that $N_G(x)\cap V(G_i)=\{v_i\}$, $\alpha(G_i-v_i)=\alpha(G_i)$, and $\alpha(G-v_i)=\alpha(G)$.
We will show that $G'_i=G_i-v_i$ is $\alpha$-minimal.
Take a vertex $y$ in $V(G'_i)$ such that  $G'_i-y$ is connected.
Since $N_G(x)\cap V(G_i)=\{v_i\}$, $G-y$ is connected. Then $\alpha(G-y)<\alpha(G)$  by  the $\alpha$-minimality of $G$.
Since $G-v_i$ has exactly two components $G-V(G_i)$ and $G_i-v_i$, we have
$$\alpha(G'_i-y) + \alpha(G-V(G_i)) \le \alpha(G-v_i-y)\le\alpha(G-y)<\alpha(G)=\alpha(G-v_i)=\alpha(G'_i) + \alpha(G-V(G_i)),$$
which follows that $\alpha(G'_i-y)<\alpha(G'_i)$.
Therefore, $G'_i$ is $\alpha$-minimal. By the induction hypothesis, $|V(G_i)|-1 = |V(G'_i)| \le 2\alpha(G'_i)-1 =2\alpha(G_i)-1$, and thus $|V(G_i)|\le 2\alpha(G_i)$.
Hence, we can conclude that $V(G_i)\le 2\alpha(G_i)$ for every $i\in\{2,\ldots,k\}$.
Then
\[ 2\alpha(G)-1\le |V(G)|-1 =\sum_{i\in [k]}|V(G_i)|\le 1+ \sum_{i=2}^{k}2\alpha(G_i) =2\alpha(G)-1, \]
which implies that $|V(G_i)|=2\alpha(G_i)$ for every $i\in\{2,\ldots,k\}$.
\end{proof}

By Claims~\ref{claim:minimal} and~\ref{claim:2a-lem},
for each $i\in\{2,\ldots,k\}$, there is a vertex $v_i\in V(G_i)$ such that $N_G(x)\cap V(G_i)=\{v_i\}$ and $\alpha(G_i-v_i)=\alpha(G_i)$.
We take a maximum independent set $I_i$ of $G_i-v_i$ for each $i\in\{2,\ldots,k\}$.
Then $I'=(\cup_{i=2}^{k}I_i)\cup \{x\}$ is an independent set of $G-v$.
Note that $|I'|=1+\sum_{i=2}^{k}\alpha(G_i-v_i)=\sum_{i\in [k]}\alpha(G_i)=\alpha(G)$, and so
$\alpha(G-v)=\alpha(G)$. Since $G-v$ is connected, we reach a contradiction to the $\alpha$-minimality of $G$.
\end{proof}

\begin{remark}\label{rmk1}
As a corollary of Lemma~\ref{lem:folk}, we have $\pc(G)\le 3\alpha(G)-2$.
To see this, by deleting vertices of $G$ one by one as long as the resulting graph is connected and has independence number $\alpha(G)$,
we find an induced connected subgraph $H$ of $G$ such that $\alpha(H)=\alpha(G)$ and $H$ is $\alpha$-minimal.
Take a spanning tree $T$ of $H$ and then recolor all the edges of $T$ with $\Delta(T)$ new colors so that $T$ is properly connected. From the fact that $\alpha(G[V(T)])= \alpha(G)$,
every vertex in $V(G)\setminus V(T)$ has a neighbor in $V(T)$  and every two nonadjacent vertices $x,y$ in $V(G)\setminus V(T)$ have distinct neighbor in $V(T)$.
Thus by taking paths in $T$, one can check that $G$ is properly connected. Hence,
\[\pc(G)\le \Delta(T)+|E(T)| \le \alpha(G)+(2\alpha(G)-2) = 3\alpha(G)-2.\]
It is not enough to obtain Theorem~\ref{thm:main} or Theorem~\ref{thm:alpha:3}, but the   idea of this argument plays a key role in its proof.
\end{remark}

\begin{lemma}\label{lem:a:minimal}
For a connected graph $G$, there is a connected induced subgraph $H$ of $G$ such that $\alpha(H)=\alpha(G)$ and $|V(H)|\le 2\alpha(G)-1$.
If such $H$ has minimum number of vertices, then for every spanning tree $T$ of $H$,
every pendent vertex of $T$ belongs to a maximum independent set of $H$ and so $T$ has at most $\alpha(G)$ pendent vertices.
\end{lemma}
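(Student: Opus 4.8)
The plan is to prove both assertions simultaneously by letting $H$ be a connected induced subgraph of $G$ satisfying $\alpha(H)=\alpha(G)$ and having the fewest possible vertices; such an $H$ exists because $G$ itself is a candidate. First I would show that this minimal $H$ is $\alpha$-minimal. Indeed, take any $v\in V(H)$ for which $H-v$ is connected. Then $H-v$ is a connected induced subgraph of $G$ on fewer vertices, so the minimality of $|V(H)|$ forbids $\alpha(H-v)=\alpha(G)$, forcing $\alpha(H-v)<\alpha(H)$. Hence every vertex of $H$ is either a cut-vertex or strictly lowers the independence number upon deletion, i.e.\ $H$ is $\alpha$-minimal. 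Applying Lemma~\ref{lem:folk} then gives $|V(H)|\le 2\alpha(H)-1=2\alpha(G)-1$, which is the first claim (and confirms that the minimal $H$ automatically meets the size bound).

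For the second claim I would fix an arbitrary spanning tree $T$ of this minimal $H$ and let $v$ be any pendent vertex of $T$. Since $T-v$ is a spanning tree of $H-v$, the graph $H-v$ is connected, and it is an induced subgraph of $G$ on one fewer vertex than $H$. By the vertex-minimality of $H$ we cannot have $\alpha(H-v)=\alpha(G)$, while the routine two-sided bound $\alpha(H)-1\le \alpha(H-v)\le\alpha(H)$ always holds; together these pin down $\alpha(H-v)=\alpha(H)-1$ exactly.

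From $\alpha(H-v)=\alpha(H)-1$ I would then deduce the key fact that \emph{every} maximum independent set of $H$ must contain $v$: if some maximum independent set $J$ avoided $v$, then $J$ would be an independent set of $H-v$ of size $\alpha(H)>\alpha(H-v)$, a contradiction. In particular $v$ lies in a maximum independent set, which is the stated membership property. Since this forcing argument applies to every pendent vertex of $T$ at once, every maximum independent set of $H$ contains all pendent vertices of $T$ simultaneously; consequently the set of pendent vertices is itself contained in an independent set and hence has size at most $\alpha(H)=\alpha(G)$.

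The argument is quite short, so I do not anticipate a serious obstacle. The one point requiring care is the exact determination $\alpha(H-v)=\alpha(H)-1$, which depends essentially on two ingredients working together: the connectivity of $H-v$ (guaranteed precisely because $v$ is a leaf of a spanning tree) and the vertex-minimality of $H$. Once that equality is secured, the conclusion that every pendent vertex lies in every maximum independent set — and therefore that the pendent vertices form an independent set of size at most $\alpha(G)$ — follows immediately.
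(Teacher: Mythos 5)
Your proof is correct and follows essentially the same route as the paper: both take a vertex-minimal connected induced subgraph $H$ with $\alpha(H)=\alpha(G)$, invoke Lemma~\ref{lem:folk} for the size bound, and use the fact that deleting a pendent vertex of a spanning tree preserves connectivity, so minimality forces that vertex to be essential to the independence number. The only (harmless) difference is organizational: you pin down $\alpha(H-v)=\alpha(H)-1$ and conclude that \emph{every} maximum independent set of $H$ contains each pendent vertex, while the paper fixes one maximum independent set $I\subset V(H)$ and derives a contradiction from a pendent vertex outside $I$; your version is marginally stronger but logically equivalent for the lemma's purposes.
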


\begin{proof}
By deleting vertices of $G$ one by one as long as the resulting graph is connected and has independence number $\alpha(G)$, we can find a connected induced subgraph $H$ of $G$
such that $\alpha(H)=\alpha(G)$.
By lemma~\ref{lem:folk}, $|V(H)|\le 2\alpha(H)-1=2\alpha(G)-1$.

Take such induced subgraph $H$ with minimum number of vertices. Take an independent set $I$ of $G$ such that $I\subset V(H)$.
If a spanning tree $T$ of $H$ has a pendent vertex $x$ not in $I$, then $H-x$ is a connected induced subgraph of $G$ such that $\alpha(H-x)=\alpha(G)$, which contradicts to the minimality of $|V(H)|$.
Thus for every spanning tree $T$ of $H$, every pendent vertex of $T$ belongs to $I$, and so $T$ has at most $\alpha(G)$ pendent vertices.
\end{proof}

\begin{lemma}\label{lem:unique:max}
Let $T$ be a tree with at least two vertices and at most $p$ pendent vertices.
If $\Delta(T)>\frac{p+3}{2}$, then there exists a unique vertex of maximum degree and a vertex with second largest degree has degree at most $\Delta(T)-2$.
\end{lemma}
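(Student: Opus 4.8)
The plan is to reduce everything to the elementary degree-sum identity for trees together with a lower bound on the number of leaves in terms of the maximum degree. Write $n=|V(T)|$ and let $\ell$ be the number of pendent vertices of $T$, so $\ell\le p$. Since $\sum_{v}\deg_T(v)=2(n-1)$ while $\sum_v 2=2n$, subtracting gives $\sum_{v}(\deg_T(v)-2)=-2$. Splitting this sum according to degree (each leaf contributes $-1$ and each degree-$2$ vertex contributes $0$) yields the key identity
\[
\sum_{v:\,\deg_T(v)\ge 3}\bigl(\deg_T(v)-2\bigr)=\ell-2 .
\]
This is the engine of the whole argument: the left-hand side is a sum of nonnegative terms, one for each branch vertex, and it is controlled by $\ell$, hence by $p$.

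First I would record the auxiliary fact that $\ell\ge\Delta(T)$. Indeed, if $v$ has degree $\Delta(T)$, then $T-v$ has exactly $\Delta(T)$ components, and in each component the vertex of $T$ at maximum distance from $v$ has all of its neighbours strictly closer to $v$; in a tree there is only one such neighbour, so this vertex is a leaf. This produces $\Delta(T)$ distinct leaves. Combined with the hypothesis $\Delta(T)>\frac{p+3}{2}\ge\frac{\ell+3}{2}\ge\frac{\Delta(T)+3}{2}$, this forces $\Delta(T)>3$, i.e.\ $\Delta(T)\ge 4$; in particular every vertex of degree at least $\Delta(T)-1$ has degree at least $3$ and thus appears in the key identity.

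The core step is to prove the second conclusion, that the second largest degree is at most $\Delta(T)-2$, since this immediately gives uniqueness of the maximum-degree vertex. Suppose for contradiction there are distinct vertices $v_1,v_2$ with $\deg_T(v_1)=\Delta(T)$ and $\deg_T(v_2)\ge\Delta(T)-1$. By the previous paragraph both have degree at least $3$, so both contribute to the identity, giving
\[
\ell-2\ \ge\ \bigl(\deg_T(v_1)-2\bigr)+\bigl(\deg_T(v_2)-2\bigr)\ \ge\ (\Delta(T)-2)+(\Delta(T)-3)=2\Delta(T)-5 .
\]
Hence $p\ge\ell\ge 2\Delta(T)-3$, that is $\Delta(T)\le\frac{p+3}{2}$, contradicting the hypothesis. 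Therefore no such $v_2$ exists, which is exactly the desired bound on the second largest degree.

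The only genuinely delicate point is guaranteeing that both high-degree vertices are captured by the identity, i.e.\ that $\Delta(T)-1\ge 3$; this is precisely what the leaf bound $\ell\ge\Delta(T)$ secures, after which everything reduces to bookkeeping with the degree-sum identity. I expect no further obstacles, and the uniqueness of the maximum-degree vertex follows for free once the second largest degree is shown to be at most $\Delta(T)-2$.
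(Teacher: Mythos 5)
Your proof is correct, but it follows a genuinely different route from the paper's. The paper argues structurally: assuming a second vertex $w$ with $\deg_T(w)\ge \Delta(T)-1$ exists, it deletes the first edge $e$ of the path from the maximum-degree vertex $v$ to $w$ and counts pendent vertices separately in the two components of $T-e$ (at least $\Delta(T)-1$ on the side of $v$, at least $\Delta(T)-2$ on the side of $w$, with a small case analysis depending on whether $e=vw$ and on which component-leaves are genuine leaves of $T$), arriving at the same numerical contradiction $p\ge 2\Delta(T)-3$. You instead extract this bound from the global identity $\sum_{u:\,\deg_T(u)\ge 3}(\deg_T(u)-2)=\ell-2$, letting the two high-degree vertices contribute $(\Delta(T)-2)+(\Delta(T)-3)$. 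Your route trades the paper's edge-splitting bookkeeping for one extra prerequisite: you must guarantee $\Delta(T)\ge 4$ so that the second vertex is a genuine branch vertex appearing in the sum, which you correctly flagged as the delicate point and secured via $\ell\ge\Delta(T)$. (A small streamlining: that auxiliary fact already follows from your identity applied to the maximum-degree vertex alone, since $\Delta(T)\ge 3$ is immediate from $p\ge 2$; the farthest-vertex argument is not needed.) As for what each approach buys: the paper's argument is purely local and works as soon as $\Delta(T)\ge 3$, while yours is lighter on case analysis and actually proves something slightly stronger, namely that the quantities $\deg_T(u)-2$ summed over all branch vertices total at most $p-2$, which controls all high-degree vertices of $T$ simultaneously rather than just the top two.
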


\begin{proof}For simplicity, let $d=\Delta(T)$.
Suppose that $d>\frac{p+3}{2}$. Note that since $p\ge 2$, $d\ge 3$.
Let $v$ be a vertex of maximum degree $d$, and $w$ be a vertex with largest degree among vertices in $V(T)\setminus\{v\}$.
Suppose to the contrary that $\deg_T(w)\ge d-1$.
Let $e$ be the first edge on the path $P_T(v,w)$.
Let $T_v$ (resp. $T_w$) be the component of $T-e$ containing $v$ (resp. $w$).
Then $\Delta(T_v)\ge d-1 (\ge 2)$ and so $T_v$ has at least $d-1$ pendent vertices and those are also pendent vertices of $T$.
If $e\neq vw$, then $\Delta(T_w)=\deg_T(w)$ and so $T_w$ has at least $d-1$ pendent vertices and those except at most one are also pendent vertices of $T$.
If $e=vw$, then $T_w$ has at least $d-2$ pendent vertices and those are also pendent vertices of $T$. Hence, there are at least $(d-1)+(d-2)$ pendent vertices in $G$.
Thus $2d-3 \le p$ or $d\le \frac{p+3}{2}$, a contradiction.
\end{proof}

\begin{lemma}\label{lem:tree:components}
Let $T$ be a spanning tree of a connected graph $H$ such that $\Delta(T)$ is small as possible.
If $T$ has a unique vertex $v$ of maximum degree and  the second largest degree of $T$ is at most $\Delta(T)-2$, then $H-v$ has $\Delta(T)$ components.
\end{lemma}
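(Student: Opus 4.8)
The plan is to prove the two bounds ``$H-v$ has at most $\Delta(T)$ components'' and ``$H-v$ has at least $\Delta(T)$ components'' separately; write $d=\Delta(T)$ for brevity. The upper bound is immediate from the tree structure: since $v$ has degree $d$, deleting $v$ from $T$ leaves exactly $d$ subtrees $T_1,\dots,T_d$, where each $T_i$ is attached to $v$ by a single edge $vu_i$ with $u_i\in V(T_i)$. As $H-v$ is obtained from the forest $T-v$ by possibly adding edges on the same vertex set, every component of $H-v$ is a union of some of the $V(T_i)$, so $H-v$ has at most $d$ components.

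The substance of the argument is the lower bound, and I would obtain it by an edge-exchange argument that exploits the minimality of $\Delta(T)$. Suppose for contradiction that two of these subtrees, say $T_1$ and $T_2$, lie in a common component of $H-v$. Then $H$ contains an edge $ab$ with $a\in V(T_1)$ and $b\in V(T_2)$ that is not incident to $v$; in particular $ab\notin E(T)$. Adding $ab$ to $T$ creates a unique cycle $C$. Because $a$ and $b$ lie in different components of $T-v$, the path in $T$ from $a$ to $b$ must pass through $v$, so $v\in V(C)$ and the two edges of $C$ incident to $v$ are precisely $vu_1$ and $vu_2$. I would then set $T'=T+ab-vu_1$, which is again a spanning tree of $H$ since $vu_1$ lies on the cycle $C$.

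Next I would track the degrees in $T'$. Deleting $vu_1$ lowers the degree of $v$ to $d-1$. Since $v$ is the \emph{unique} vertex of maximum degree and the second largest degree is at most $d-2$, every vertex other than $v$ has degree at most $d-2$ in $T$; in particular this holds for $a$ and $b$, so after the single increment caused by the new edge $ab$ their degrees in $T'$ are at most $d-1$ (and in the boundary case $a=u_1$, the simultaneous loss of $vu_1$ keeps $\deg_{T'}(a)\le d-2$, which only helps). All remaining vertices are unchanged, with degree at most $d-2$. Hence $\Delta(T')\le d-1<d=\Delta(T)$, contradicting the choice of $T$ as a spanning tree of minimum maximum degree. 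Therefore no two of the subtrees merge in $H-v$, so $H-v$ has at least $d$ components, and together with the upper bound it has exactly $\Delta(T)$ components.

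The main obstacle---indeed essentially the only delicate point---is verifying that the exchange \emph{strictly} decreases the maximum degree rather than merely preserving it, and this is exactly where the hypothesis that the second largest degree is at most $\Delta(T)-2$ is indispensable: a gap of only $1$ would permit $a$ or $b$ to climb to degree $d$ after the increment, destroying the contradiction. I would therefore be careful to record the degree bookkeeping for $a$ and $b$ explicitly, including the coincidence $a=u_1$ (or $b=u_2$), and to confirm that $T'$ is genuinely a spanning tree before comparing $\Delta(T')$ with $\Delta(T)$.
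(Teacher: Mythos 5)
Your proof is correct and follows essentially the same route as the paper's: both rest on the edge exchange $T'=(T-vu_1)+ab$, where $ab$ joins two subtrees of $T-v$ lying in one component of $H-v$, and both use the degree gap of $2$ to conclude $\Delta(T')\le\Delta(T)-1$, contradicting the minimality of $\Delta(T)$. (Only pedantic note: two subtrees sharing a component of $H-v$ need not be joined by a direct edge, so you should choose $T_1,T_2$ to be a pair of subtrees within that component that \emph{are} joined by an edge, which connectivity guarantees exists; your explicit handling of the case $a=u_1$ is actually more careful than the paper's.)
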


\begin{proof}
Take a component $H'$ of $H-v$. It is enough to show that  $|E_T(v,V(H'))| = 1$.
It is also trivial that $|E_T(v,V(H'))|\ge 1$ since $T$ is a spanning tree of $H$.
Suppose to the contrary that $|E_T(v,V(H'))|\ge 2$.
Let $E_T(v,V(H'))=\{vx_1,\ldots,vx_k\}$, and $C_i$ be the component of $T-v$ containing $x_i$.
Since $H'$ is connected, $E_{H'}(V(C_i),V(C_j))\neq\emptyset$ for some distinct $i,j$.
Then $T'=(T-vx_i)+e$ for some $e\in E_{H'}(V(C_i),V(C_j))$ is also a spanning tree of $H$.
Since  the second largest degree of $T$ is at most $\Delta(T)-2$, we have $\Delta(T')\le \Delta(T)-1$, which is a contradiction to the minimality of $\Delta(T)$.
\end{proof}


The following lemma contains a key idea for the proof of Theorem~\ref{thm:main}.

\begin{lemma}\label{lem:key}
Let $G$ be a connected graph with $\alpha(G)\ge 3$, and $H$ be a minimum connected induced subgraph of $G$ such that $\alpha(H)=\alpha(G)$.
For every spanning tree $T$ of $H$,
\[\pc(G)\le |E(T)|+\Delta(T)-|M|,\]
where  $M$ is a set of edges $ww'\in E(T)$ such that
$\deg_T(w)=1$, $\deg_T(w')=2$, and the following property (\S) holds:
\begin{quote}
(\S) For every $x\in V(G)\setminus V(T)$, if $xw\in E(G)$ then  $xw'\in E(G)$.
\end{quote}
\end{lemma}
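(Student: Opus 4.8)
The plan is to sharpen the construction behind Remark~\ref{rmk1}. Fix the spanning tree $T$ and aim to produce a proper edge-coloring of $T$ in which every edge of $M$ keeps color $1$ while every other edge of $T$ receives a new color, using at most $\Delta(T)$ new colors in total; this recolors $|E(T)|-|M|$ edges with $q\le\Delta(T)$ new colors, so $p+q\le|E(T)|+\Delta(T)-|M|$, and it remains only to verify that $G$ is properly connected. Such a coloring exists: first take any proper edge-coloring of $T$ with $\Delta(T)$ new colors (possible since $T$ is a tree), then revert each edge of $M$ to color $1$. For $ww'\in M$ the vertex $w$ is a leaf and $w'$ has degree $2$, so the only tree-edge incident to $ww'$ is $w'w''$, where $w''$ is the second neighbour of $w'$, and $w'w''$ still carries a new color; moreover, because $\alpha(H)\ge3$, no two edges of $M$ can be adjacent (otherwise $T$ would be the $3$-vertex path, forcing $\alpha(H)=2$), so $M$ is a matching. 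Hence reverting the $M$-edges keeps $T$ properly edge-colored.

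Next I would record the two structural facts I will lean on. Since $H$ is a minimum connected induced subgraph with $\alpha(H)=\alpha(G)$, every $x\in V(G)\setminus V(T)$ has a neighbour in every maximum independent set of $H$ (otherwise that set together with $x$ would exceed $\alpha(G)$); and by Lemma~\ref{lem:a:minimal} every leaf of $T$ lies in some maximum independent set of $H$. The routing scheme is the standard one from Remark~\ref{rmk1}: to join two vertices I enter $T$ through a color-$1$ edge from each external endpoint and follow the properly colored tree path between the entry points, using distinct entry points for two nonadjacent external vertices. Every tree-internal edge now carries a new color, so a routed path can fail to be proper only by using two consecutive color-$1$ edges, and such a junction can occur only at an $M$-edge $ww'$ or where two external connecting edges meet at a common tree vertex.

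The crux is to rule out these bad junctions, and this is exactly where property (\S) and the leaf fact enter. The delicate pairs are those whose path is forced across $ww'$. A path from a tree vertex reaches the leaf $w$ by entering $w'$ along $w'w''$ (a new color) and then using $ww'$, which is proper; an external vertex adjacent to $w$ reaches it by the single color-$1$ edge $xw$; and an external vertex $x$ not adjacent to $w$ must approach $w$ through $w''$, which requires a tree-neighbour other than $w'$. Here I would invoke the leaf fact: if $x$ had $w'$ as its only neighbour in $H$, then $w'$ would lie in every maximum independent set of $H$, contradicting that the leaf $w$ (which is adjacent to $w'$) lies in one of them; thus such an $x$ always has an alternative entry point. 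Finally, an external vertex $x$ adjacent to $w$ must never be forced to route $x\to w\to w'$ along two color-$1$ edges; property (\S) guarantees $xw'\in E(G)$, so $x$ may enter $T$ at $w'$ instead and avoid $ww'$ entirely. I expect the main obstacle to be precisely this last case analysis: checking uniformly, across all pairs of vertices and all $M$-edges at once, that one can always avoid two consecutive color-$1$ edges, which is where the matching property of $M$, property (\S), and Lemma~\ref{lem:a:minimal} have to be combined.
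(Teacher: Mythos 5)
Your coloring and counting step is exactly the paper's: properly color $E(T)\setminus M$ with $\Delta(T)$ new colors, leave the edges of $M$ on color $1$ (and your observation that $M$ is a matching because $\alpha(H)\ge 3$ matches the paper's), giving $p+q\le |E(T)|+\Delta(T)-|M|$. The genuine gap is that you never prove $G$ is properly connected under this coloring; you defer exactly the part that constitutes the bulk of the paper's proof. Most seriously, for two nonadjacent vertices $u,v\notin V(H)$ you simply announce that you will use ``distinct entry points'', but the existence of distinct \emph{safe} entry points is the crux of the lemma: an entry point is unusable not only when it coincides with the other vertex's entry point, but also when it is a pendant endpoint $w$ of an $M$-edge, since then the two color-$1$ edges $uw$ and $ww'$ are consecutive. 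The paper handles this by introducing $X'(u)=(N_G(u)\cap V(H))\setminus W$, where $W$ is the set of pendant endpoints of edges of $M$, proving $X'(u)\neq\emptyset$ via (\S), and then, in the critical case $X'(u)=X'(v)=\{z\}$, combining (\S), the matching property of $M$, and Lemma~\ref{lem:a:minimal} to force $N_G(u)\cap V(H)=N_G(v)\cap V(H)=\{w,w'\}$ for a single edge $ww'\in M$, whence $(I\setminus\{w\})\cup\{u,v\}$ is an independent set of size $\alpha(G)+1$, a contradiction. Nothing in your proposal carries out, or even identifies, this argument.

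Moreover, the one delicate case you do sketch is flawed as stated. For an external $x$ not adjacent to the pendant vertex $w$ of an $M$-edge $ww'$, you use the leaf fact to conclude that $x$ has a neighbour in $H$ other than $w'$ and declare this an ``alternative entry point''. But that neighbour may itself lie in $W$, say it is the pendant endpoint $t$ of another $M$-edge $tt'$; entering there puts the two color-$1$ edges $xt$ and $tt'$ back to back, so it is not a safe entry. Repairing this requires invoking (\S) once more (to get $xt'\in E(G)$) together with the matching property of $M$ (to see $t'\neq w'$), which is precisely the content of the paper's claim that $X'(x)\neq\emptyset$ and the contradiction argument that follows it. So your framework is the right one, and identical to the paper's, but the heart of the lemma --- the case analysis establishing proper connectivity --- is missing, and the fragments of it that you do give would not survive scrutiny without the $X'$-machinery.
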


\begin{proof}
Let $I$ be a maximum independent set of $H$.
We color all edges in $E(T)- M$ with the new colors $2,3,\ldots,\Delta(T)+1$ (this is possible since the edge chromatic number of a tree $T$ is $\Delta(T)$). Since the number of new colors is $\Delta(T)$ and the number of  edges colored by new colors is $|E(T)|-|M|$, it is enough to show that $G$ is properly connected.

Let $W$ be the set of pendent vertices of $T$ incident to an edge of $M$.
For every $u\in V(G)\setminus V(H)$,
we define \[ X(u)=(N_G(u)\cap I)-W \quad\text{and}\quad  X'(u)=(N_G(u)\cap V(H))-W .\]
By definition, it is clear that $X(u)\subset X'(u)\subset V(H)$ for every $u\in V(G)\setminus V(H)$.

\begin{claim}\label{claim:Xv}
For every $u\in V(G)\setminus V(H)$,  $X'(u) \neq \emptyset$.
\end{claim}

\begin{proof} Take $u\in V(G)\setminus V(H)$.
It is trivial that if $X(u)\neq \emptyset$ then $X'(u)\neq \emptyset$.
Suppose that $X(u)=\emptyset$.
Since $u\not\in I$, $N_G(u)\cap I\neq \emptyset$ by the maximality of $|I|$.
Thus by the definition of $X(u)$,  $(N_G(u)\cap I)\cap W \neq \emptyset$.
Take $w\in(N_G(u)\cap I)\cap W$. Then $ww'\in M$ for some vertex $w'\in V(H)$. By the property ($\S$),  $X'(u)$ contains $w'$ and therefore $X'(u)\neq \emptyset$.
\end{proof}

Take two vertices $u,v$ of $G$ that are not adjacent.
If $u,v\in V(H)$, then we can easily check that every path $P_T(u,v)$ is properly colored by the definition of $M$ (since $\deg_G(w')=2$ and $\alpha(G)\ge 3$, we know that $M$ is a matching).
Suppose that $u\not\in V(H)$ and $v\in V(H)$.
Then we can take a vertex $u'\in X'(u)$ by Claim~\ref{claim:Xv}.
Note that the edge $uu'$ has the color 1.
If the first edge of the path $P_T(u',v)$ does not use the color 1, then $u + P_T(u',v)$ gives a properly colored path from $u$ to $v$.
Suppose that the first edge of the path $P_T(u',v)$ is colored by the color 1.
Then $u'v\in M$ and $X'(u)=\{u'\}$. Note that by Lemma~\ref{lem:a:minimal}, $v\in I$ and so $u'\not\in I$.
Since $I\cup\{u\}$ is not an independent set of $G$ by the maximality of $|I|$, $u$ has a neighbor $z$ in $I$ and $z\in W$. Then $z\neq u'$ and  $zz'\in E(T)$ for some $z'\in V(T)$. By ($\S$), it follows that $uz'\in E(G)$ and so $z'\in X'(u)$. Since $u\neq z$, $u'\neq z'$  which is a contradiction.

Now suppose that $u,v\not\in V(H)$.
If we can take $u'\in X'(u)$ and $v'\in X'(v)$ so that $u'\neq v'$, then
$u+P_T(u',v')+v$ gives a properly colored path from $u$ to $v$.
Suppose that we cannot take distinct  $u'\in X'(u)$ and $v'\in X'(v)$.
Claim~\ref{claim:Xv} implies that $X'(u)=X'(v)=\{z\}$ for some vertex $z\in V(H)\setminus W$.
If $ (N_G(u)\cap I)\cap W=(N_G(v)\cap I)\cap W=\emptyset$,
then $N_G(u)\cap I=N_G(v)\cap I=\{z\}$, which means that $(I\setminus\{z\})\cup\{u,v\}$ is an independent set, a contradiction to the maximality of $|I|$.
We may assume that $(N_G(u)\cap I)\cap W\neq \emptyset$.
Take  $w\in (N_G(u)\cap I)\cap W$.
Then $ww'\in M$ for some vertex $w'\in V(H)\setminus W$.
By the property ($\S$), $uw'\in E(G)$.
Hence $X'(u)=X'(v)=\{z\}=\{w'\}$.
Note that since $w\in I$, we have $z\not\in I$.

Since $N_G(v)\cap I\neq\emptyset$ by the  maximality of $|I|$, there is $y\in (N_G(v)\cap I)\cap W$. By definition, $yy'\in M$ and so $y'\in X'(v)$ by the property ($\S$) for some vertex $y'\in V(H)\setminus W$.
Thus $y'=z=w'$, $y=w$, and so $N_G(u)\cap V(H) = N_G(v)\cap V(H)=\{w,w'\}$.
Since $u$ and $v$ are not adjacent, $(I-\{w\})\cup\{u,v\}$ is an independent set of $G$, which is a contradiction to the maximality of $|I|$.
\end{proof}

\section{Proof of Theorem~\ref{thm:main}}

\begin{proof}[Proof of Theorem~\ref{thm:main}]
Recall Theorem~\ref{thm:alpha:2} and the fact that $\alpha(G)=1$ if and only if $\pc(G)=0$. Thus the theorem holds when $\alpha(G)\le 2$.
Let $G$ be a connected graph with $\alpha(G)\ge 3$.
We take a connected  induced subgraph $H$ of $G$ with $\alpha(H)=\alpha(G)$, a spanning tree $T$ of $H$, a maximum independent set $I$ of $H$ so that
\begin{itemize}
\item[(1)] $|V(H)|$ is minimum,
\item[(2)] $\Delta(T)$ is minimum subject to (1), and
\item[(3)] $|I\cap \{x\in V(H)|\deg_T(x)=\Delta(T)\}|$ is minimum subject to (2).
\end{itemize}
By Lemma~\ref{lem:a:minimal} and by the choice of $H$, $T$ and $I$, it is easy to see that the following hold:
\begin{itemize}
\item $|V(H)|\le 2\alpha(G)-1$,
\item $T$ is a tree at most $\alpha(G)$ pendent vertices, and
\item $I$ is a maximum independent set $I$ of $G$.
\end{itemize}
If $\Delta(T)\le \frac{\alpha(G)+3}{2}$, then by Lemma~\ref{lem:key},
\[\pc(G)\le  |E(T)|+\Delta(T)\le (2\alpha(G)-2) + \frac{\alpha(G)+3}{2} \le \frac{5\alpha(G)-1}{2}.\]
In the following, we suppose that $\Delta(T)>\frac{\alpha(G)+3}{2}$.
By Lemma~\ref{lem:unique:max}, there is a unique vertex $v$ in $T$ with maximum degree and so the condition (3) for the choice of $I$ is equivalent to say `$|I\cap \{v\}|$  is minimum'.
We also note that by Lemma~\ref{lem:unique:max},  the second largest degree of $T$ is at most $\Delta(T)-2$. We let $d=\Delta(T)$. By Lemma~\ref{lem:tree:components}, $H-v$ has exactly $d$ components. Let $H _1$, $\ldots$, $H_{d}$ be the components of $H-v$, and let $H^*_i=G[V(H_i)\cup\{v\}]$ for every $i\in[d]$.

\begin{claim}\label{claim:notv}
For every $i\in[d]$, $\alpha(H_i)=|I\cap V(H_i)|$. Moreover, if $v\in I$ then $\alpha(H^*_i) =|I\cap V(H^*_i)|= \alpha(H_i)+1$ and every maximum independent set of $H^*_i$ contains $v$.
\end{claim}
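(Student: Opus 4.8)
The plan is to establish the first assertion first, since it carries all the weight, and then to read off the second one from it together with the hypothesis $v\in I$. Two facts will be used repeatedly: because $H_1,\dots,H_d$ are the (pairwise non-adjacent) components of $H-v$, we have $\alpha(H-v)=\sum_{i=1}^{d}\alpha(H_i)$; and because $I\cap V(H_i)$ is an independent set of $H_i$, we have $|I\cap V(H_i)|\le\alpha(H_i)$ for each $i$. The task in the first part is to upgrade each of these inequalities to an equality, and I would do this by splitting on whether $v\in I$.

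If $v\notin I$, then $I=\bigcup_{i}(I\cap V(H_i))$ is an independent set of $H-v$, so
\[|I|=\sum_{i=1}^{d}|I\cap V(H_i)|\le\sum_{i=1}^{d}\alpha(H_i)=\alpha(H-v)\le\alpha(H)=|I|.\]
The two extremes coincide, forcing every inequality to be tight, and in particular $|I\cap V(H_i)|=\alpha(H_i)$ for all $i$.

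The case $v\in I$ is the one delicate point, and it is where the optimality of the chosen $I$ is indispensable: one cannot naively replace $I\cap V(H_j)$ by a larger independent set of $H_j$, since the extra vertices might be adjacent to $v$. Instead I would invoke condition (3). As already noted, (3) makes $I$ minimize $|I\cap\{v\}|$ over all maximum independent sets of $H$; hence $v\in I$ can occur only when \emph{every} maximum independent set of $H$ contains $v$, which gives $\alpha(H-v)=\alpha(H)-1=|I|-1$. Then $I\setminus\{v\}$ is a maximum independent set of $H-v$, and decomposing it over the components exactly as above yields $|I\cap V(H_i)|=\alpha(H_i)$ for every $i$, completing the first assertion.

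For the second assertion, assume $v\in I$. Then $I\cap V(H^*_i)=(I\cap V(H_i))\cup\{v\}$ is an independent set of $H^*_i$ of size $\alpha(H_i)+1$ by the first part, so $\alpha(H^*_i)\ge\alpha(H_i)+1$; since $H^*_i$ is obtained from $H_i$ by adjoining the single vertex $v$, its independence number cannot exceed $\alpha(H_i)+1$, whence $\alpha(H^*_i)=|I\cap V(H^*_i)|=\alpha(H_i)+1$. Finally, any independent set of $H^*_i$ avoiding $v$ sits inside $V(H_i)$ and so has at most $\alpha(H_i)<\alpha(H^*_i)$ vertices, so every maximum independent set of $H^*_i$ must contain $v$. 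The only genuine obstacle in the whole argument is the $v\in I$ case of the first part, where condition (3) does the essential work; everything else is bookkeeping.
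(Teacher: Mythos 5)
Your proof is correct and takes essentially the same route as the paper: the $v\notin I$ case is the identical counting argument, and the $v\in I$ case rests, exactly as in the paper, on condition (3) combined with the component decomposition of $H-v$. The only difference is organizational — you read (3) contrapositively up front (every maximum independent set of $H$ contains $v$, so $\alpha(H-v)=\alpha(H)-1$, forcing tightness termwise), whereas the paper assumes slack $\alpha(H_i)>|I\cap V(H_i)|$ in some component and explicitly constructs a maximum independent set avoiding $v$ to contradict (3); these are the same exchange argument in two directions.
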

\begin{proof}
It is clear that $|I\cap V(H_i)|\le \alpha(H_i)$ for every $i\in[d]$.
Since the union of independent sets of $H_i$'s is also an independent set of $G$, $\sum_{i\in[d]} \alpha(H_i)  \le \alpha(G).$
If $v\not\in I$, then the claim holds clearly, since
\[\alpha(G)=|I|= \sum_{i\in[d]} |I\cap V(H_i) |\le \sum_{i\in[d]} \alpha(H_i)  \le \alpha(G).\]
Suppose that $v\in I$.
Then it holds that
\[ \alpha(G)-1 =|I|-1 = \sum_{i\in[d]} |I\cap V(H_i) |\le \sum_{i\in[d]} \alpha(H_i)  \le \alpha(G).\]
Suppose to the contrary that there is some $i\in[d]$ such that  $\alpha(H_i) > |I\cap V(H_i)|$.
Then $\alpha(H_i)=|I\cap V(H_i)|+1=|I\cap V(H^*_i)|$ and $\alpha(H_j)=|I\cap V(H_j)|$ for every
$j\in[d]$ with $j\neq i$.
Take a maximum independent set $I'_i$ of $H_i$.
Then  $I'=I'_i \cup \left(I-V(H^*_i)\right)$ is an independent set of $G$.
Since $|I'|=|I'_i|+ |I|-|I\cap V(H^*_i)|=|I|$,  $I'$ is a maximum independent set of $G$.
Then  $I'\subset V(H)$ and  $v\not\in I'$, which is a contradiction to the condition (3) for the choice of $I$. Hence, $\alpha(H_i)=|I\cap V(H_i)|$.
This also implies that  $\alpha(H_i)+1\ge \alpha(H^*_i) \ge |I\cap V(H^*_i)|= \alpha(H_i)+1$. Thus
$\alpha(H_i)+1 = \alpha(H^*_i)$ and  every maximum independent set of $H^*_i$ contains $v$.
\end{proof}

\begin{claim}\label{claim:2a}
For every $i\in [d]$, $|V(H_i)|\le 2\alpha(H_i)$.
\end{claim}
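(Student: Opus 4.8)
The plan is to exploit the fact that the chosen $H$ is itself \emph{$\alpha$-minimal}: since $H$ is a minimum connected induced subgraph of $G$ with $\alpha(H)=\alpha(G)$, deleting any single vertex either disconnects $H$ or drops its independence number below $\alpha(G)$. I would argue by contradiction, assuming $|V(H_i)|\ge 2\alpha(H_i)+1$ for some $i$, and in each case produce either a vertex $u$ with $H-u$ connected and $\alpha(H-u)=\alpha(G)$ (contradicting $\alpha$-minimality of $H$) or an $\alpha$-minimal subgraph sitting inside $H_i$ whose order violates Lemma~\ref{lem:folk}. The argument naturally splits according to whether $v\in I$, because by Claim~\ref{claim:notv} the available independent-set count inside $H_i^*$ differs by exactly one between the two cases.

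Suppose first that $v\in I$. Then Claim~\ref{claim:notv} gives $\alpha(H_i^*)=\alpha(H_i)+1$, so the assumption $|V(H_i)|\ge 2\alpha(H_i)+1$ yields $|V(H_i^*)|\ge 2\alpha(H_i^*)$. By Lemma~\ref{lem:folk}, $H_i^*$ cannot be $\alpha$-minimal, so there is $u\in V(H_i^*)$ with $H_i^*-u$ connected and $\alpha(H_i^*-u)=\alpha(H_i^*)$; since deleting $v$ drops the independence number (again by Claim~\ref{claim:notv}), $u\neq v$, hence $u\in V(H_i)$. I would then check two things: that $H-u$ is still connected, which holds because $H_i^*-u$ being connected keeps every vertex of $H_i-u$ joined to $v$ while $v$ is joined to all other $H_j$; and that every maximum independent set of $H_i^*-u$ contains $v$, since $(H_i^*-u)-v=H_i-u$ has independence number one smaller. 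Gluing such a maximum independent set of $H_i^*-u$ to $\bigcup_{j\neq i}(I\cap V(H_j))$ then produces an independent set of $H-u$ of size $\alpha(G)$, contradicting the $\alpha$-minimality of $H$.

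Now suppose $v\notin I$, so $\sum_j\alpha(H_j)=|I|=\alpha(G)$. If $H_i$ is $\alpha$-minimal, Lemma~\ref{lem:folk} already gives $|V(H_i)|\le 2\alpha(H_i)-1$, a contradiction; so assume $H_i$ is not $\alpha$-minimal and pick $y\in V(H_i)$ with $H_i-y$ connected and $\alpha(H_i-y)=\alpha(H_i)$. Combining a maximum independent set of $H_i-y$ with $\bigcup_{j\neq i}(I\cap V(H_j))$ gives an independent set of size $\alpha(G)$ in $H-y$, so if $H-y$ were connected we would again contradict $\alpha$-minimality of $H$; hence $H-y$ is disconnected, which forces $N_H(v)\cap V(H_i)=\{y\}$, i.e. $y$ is the unique neighbour $x_i$ of $v$ in $H_i$. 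It then remains to show that $G_i':=H_i-x_i$ is $\alpha$-minimal, after which Lemma~\ref{lem:folk} gives $|V(H_i)|-1\le 2\alpha(G_i')-1=2\alpha(H_i)-1$, the desired contradiction. This last step mirrors Claim~\ref{claim:2a-lem}: for $z$ with $G_i'-z$ connected and $x_i$ still having a neighbour in $G_i'-z$ (so that $H-z$ is connected), the chain $\alpha(G_i'-z)+\alpha(H-V(H_i))\le\alpha(H-z)<\alpha(H)=\alpha(G_i')+\alpha(H-V(H_i))$ forces $\alpha(G_i'-z)<\alpha(G_i')$; in the degenerate case where $x_i$ is a pendant vertex of $H_i$ with sole $H_i$-neighbour $z$, one instead notes directly that appending $x_i$ to a maximum independent set of $G_i'-z$ shows $\alpha(G_i'-z)+1\le\alpha(H_i)=\alpha(G_i')$.

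I expect the main obstacle to be the connectivity bookkeeping around the cut vertex $v$ and its unique neighbour $x_i$: namely, verifying that the vertex deleted from a component leaves $H$ connected, so that the $\alpha$-minimality of $H$ can legitimately be invoked, and separately handling the degenerate pendant configuration in the $v\notin I$ case. Keeping the two cases $v\in I$ and $v\notin I$ apart, and using Claim~\ref{claim:notv} to control exactly how $v$ contributes to maximum independent sets, is what makes the counting come out to the sharp bound $2\alpha(H_i)$ rather than $2\alpha(H_i)-1$.
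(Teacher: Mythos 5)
Your proof is correct, and while it shares the paper's skeleton (contradiction, the case split on whether $v\in I$, Claim~\ref{claim:notv}, and ultimately Lemma~\ref{lem:folk}), its execution is genuinely different. The paper never argues via $\alpha$-minimality of $H$ or of $H_i-x_i$: in both cases it invokes Lemma~\ref{lem:a:minimal} to extract a minimum connected induced subgraph $H'_i$ of $H_i$ (resp.\ of $H^*_i$) with the same independence number, and then uses condition (1) on $|V(H)|$ to show that the leftover set $X=V(H_i)-V(H'_i)$ has at most one vertex (resp.\ is empty), splitting on whether $v$ has a neighbour inside $V(H'_i)$. Your $v\in I$ case is close to the paper's in spirit (a single-vertex deletion contradicting the minimality of $H$), but your $v\notin I$ case replaces the paper's leftover-counting entirely: you pin down the deletable vertex as $v$'s unique attachment vertex $x_i$ in $H_i$, and then re-prove that $H_i-x_i$ is itself $\alpha$-minimal---essentially re-running Claim~\ref{claim:2a-lem} from the proof of Lemma~\ref{lem:folk}, pendant degenerate case included---so that Lemma~\ref{lem:folk} can be applied to it directly. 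Your route is more self-contained (it never needs Lemma~\ref{lem:a:minimal}) at the cost of redoing that bookkeeping; the paper's is shorter because Lemma~\ref{lem:a:minimal} absorbs it as a black box. One step you assert rather than derive: the equality $\alpha(H)=\alpha(H_i-x_i)+\alpha(H-V(H_i))$ in your chain. It does hold, because $H-x_i$ has exactly the two components $H_i-x_i$ and $H-V(H_i)$, and
\[
\alpha(H)\ \ge\ \alpha(H-x_i)\ =\ \alpha(H_i-x_i)+\alpha(H-V(H_i))\ \ge\ \alpha(H_i)+\sum_{j\neq i}\alpha(H_j)\ =\ \alpha(H)
\]
forces equality throughout, but this line should appear explicitly in a final write-up.
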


\begin{proof}
Suppose to the contrary that  $|V(H_i)|\ge 2\alpha(H_i)+1$ for some $i\in [d]$.
First, suppose that $v\not \in I$. We take a minimal connected induced subgraph $H'_i$ of $H_i$ such  that $\alpha(H'_i)=\alpha(H_i)$.
By Lemma~\ref{lem:a:minimal},  $|V(H'_i)|\le 2\alpha(H_i)-1$.
It is enough to show that $|V(H_i)-V(H'_i)|\le 1$. For simplicity, let $X=V(H_i)-V(H'_i)$. Suppose to the contrary that $|X|\ge 2$.
If $V(H_i)$ contains a neighbor of $v$ then $H'=H-X$ is a connected induced subgraph of $G$ such that $\alpha(H')=\alpha(G)$ and $|V(H')|<|V(H)|$, which is a contradiction to the condition (1) for the choice of $H$.
Thus $V(H_i)$ does not contain a neighbor of $v$.
Then there is an edge $xv$ of $G$ for some $x\in X$.
Since $H'_i$ contains a maximum independent set $I'$ of $H_i$, $E_G(x, V(H'_i) )\neq \emptyset$ by the maximality of $|I'|$.
Then $H':=H- (X\setminus \{x\})$ is a connected induced subgraph of $G$ such that $\alpha(H')=\alpha(G)$ and $|V(H')|<|V(H)|$,  which is a contradiction to the condition (1) for the choice of $H$.

Secondly, suppose that $v\in I$. We take a minimal connected induced subgraph $H'_i$ of $H^*_i$ such that $\alpha(H'_i)=\alpha(H^*_i)$.
By Lemma~\ref{lem:a:minimal}~and Claim~\ref{claim:notv}, $|V(H'_i)|\le 2(\alpha(H^*_i)+1)-1=2\alpha(H_i)+1$.
By Claim~\ref{claim:notv} again, $v$ belongs to a maximum independent set of $H'_i$ and so $v\in V(H'_i)$.
Consider $H'=H-X$ where $X=V(H_i)-V(H'_i)$. Then $\alpha(H')=\alpha(H'_i)+|I-V(H^*_i)|=\alpha(H)$.
If $X\neq \emptyset$ then $|V(H')|<|V(H)|$, which is a contradiction to the condition (1) for the choice of $H$. Thus $X=\emptyset$ and so $|V(H_i)|\le |V(H'_i)|-1\le 2\alpha(H_i)$.
\end{proof}

Let $\mathcal{S}_2=\{H_i\mid |V(H_i)|=2\}$  and $\mathcal{S}_4=\{H_i\mid |V(H_i)|\ge 4 \}$.
Then \[  1+(d-|\mathcal{S}_4|-|\mathcal{S}_2|) +2|\mathcal{S}_2|+ 4|\mathcal{S}_4| \le |V(H)| \le  2\alpha-1,\]
and so  $|\mathcal{S}_2|+3|\mathcal{S}_4| \le 2\alpha -d-2$.
Then \begin{eqnarray}\label{eq:S4}
&&|\mathcal{S}_4| \le \frac{2\alpha -d -2-|\mathcal{S}_2|}{3}.
\end{eqnarray}
Moreover, if $H_i\not\in\mathcal{S}_2\cup \mathcal{S}_4$, then $|V(H_i)|=1$ or $3$ and so $|V(H_i)|\le 2\alpha(H_i)-1$.
Hence, by Claim~\ref{claim:2a},
\[|V(T)|\le 1 +\sum_{i\in [d]} (2\alpha(H_i)-1)  + |\mathcal{S}_2|+|\mathcal{S}_4|\le 1+ 2\alpha(G)-d+|\mathcal{S}_2|+|\mathcal{S}_4|.\]
By applying Lemma~\ref{lem:key},
\[\pc(G) \le (2\alpha(G)-d+|\mathcal{S}_2|+|\mathcal{S}_4|) +d-|M|= 2\alpha(G) +|\mathcal{S}_2|+|\mathcal{S}_4| -|M|,
\]
where $M$ is a set of edges $T$ in $G$ satisfying the properties in Lemma~\ref{lem:key}.
By \eqref{eq:S4},
\[\pc(G)
\le  2\alpha(G) +|\mathcal{S}_2|+\frac{2\alpha(G) -d  -2-|\mathcal{S}_2|}{3}-|M|< \frac{5\alpha(G)-1}{2}  +\frac{2|\mathcal{S}_2|-2}{3}-|M|,\]
where the last inequality is from the assumption that  $d>\frac{\alpha(G)+3}{2}$.

Suppose that $|M|\ge |\mathcal{S}_2|-1$.
Then\[
\pc(G) \le \frac{5\alpha(G)-1}{2}  +\frac{2|\mathcal{S}_2|-2}{3} - (|\mathcal{S}_2|-1) \le  \frac{5\alpha(G)-1}{2}  +\frac{-|\mathcal{S}_2|+1}{3}\le \frac{5\alpha(G)-1}{2}  +\frac{1}{3}.\]
Since $\pc(G)$ is an integer, $\pc(G)\le \frac{5\alpha(G)-1}{2}$, and so the theorem holds.
Thus, it is enough to show that $|M|\ge |\mathcal{S}_2|-1$.
Precisely, we will show that  if $|\mathcal{S}_2|\ge 2$, then every edge of $\mathcal{S}_2$ satisfies the properties in Lemma~\ref{lem:key}.

Suppose that $|\mathcal{S}_2|\ge 2$. Since $\mathcal{S}_2\neq \emptyset$, the minimality of $|V(H)|$ implies that $v\in I$.
Take $H_i \in \mathcal{S}_2$. Let $V(H_i)=\{w,w'\}$, and  without loss of generality, we may assume that $\deg_T(w)=1$ and $\deg_T(w')=2$.
Suppose to the contrary that there is a vertex $x\in V(G)\setminus V(H)$ with $xw\in E(G)$ and $xw'\not\in E(G)$. We first show that
\begin{eqnarray}
&&N_G(x)\cap V(H) \subset \{v,w,w'\}.  \label{eq:vww'}
\end{eqnarray}

To show \eqref{eq:vww'} by contradiction,
suppose that there is $y\in N_G(x)\cap V(H_j)$ for some $j$ with  $j\neq i$.
Consider the subgraph $H'$ of $G$ induced by $(V(H)-\{w'\})\cup \{x\}$.
Let $T'$ be a tree obtained from $T-w'$ by adding the vertex $x$ and adding edges $xw,xy$.
Then $H'$ has $I$ as an independent set (and therefore $\alpha(H')=\alpha(G)$) and  $T'$ as a spanning tree.
Since $|V(H')|=|V(H)|$  and  $\Delta(T')\le \Delta(T)-1$, we reach a contradiction to the condition (2) for the choice of $T$.
Thus, $ (N_G(x)\cap V(H_j)) =\emptyset$ for every $j\in[d]$ with  $j\neq i$, which also implies \eqref{eq:vww'}.

Since $|\mathcal{S}_2|\ge 2$, we can take some $H_{i'}\in \mathcal{S}_2$ where $i\neq i'$.
Let $V(H_{i'})=\{z,z'\}$, where $\deg_T(z)=1$ and $\deg_T(z')=2$.
Consider the subgraph $H'$ of $G$ induced by $(V(H)-\{z\})\cup \{x\}$.
Let $T'$ be a tree obtained from $T-z$ by adding the vertex $x$ and adding edges $xw$. Then $v$ is a unique vertex of maximum degree in $T'$ and $\Delta(T')=\Delta(T)$.
By \eqref{eq:vww'}, $I'=(I-\{w,z,v\})\cup \{x,w',z'\}$ is a maximum independent set of $G$ such that $I'\subset V(H')$.
Since  $v\not\in I'$, we reach a contradiction to the condition (3) for the choice of $I$.
\end{proof}

\section{Proof of Theorem~\ref{thm:alpha:3}}
\begin{lemma}\label{prop:config}
For a connected graph $G$ with $\alpha(G)=3$, $\pc(G)\le 4$ if one of the following holds:
\begin{itemize}
\item[\rm(i)] $V(G)$ can be partitioned into three cliques;
\item[\rm(ii)] $V(G)$ can be partitioned into four cliques and there is a matching $M$ of size three such that those four disjoint cliques together with the edges in $M$ is a connected spanning subgraph of $G$;
\item[\rm(iii)] $G$ has a $6$-cycle $x_1,y_1,x_2,y_2,x_3,y_3,x_1$ such that each of $\{x_1,x_2,x_3\}$ and  $\{y_1,y_2,y_3\}$ is an independent set of $G$;
\item[\rm(iv)] $G$ has one of $H_1$-$H_6$ in Figure~\ref{fig:config} as a subgraph such that $\{x_1,x_2,x_3\}$ is an independent set of $G$.
\end{itemize}
\end{lemma}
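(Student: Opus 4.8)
The plan is to treat the four configurations (i)--(iv) separately, in each case exhibiting an explicit recoloring of $G$ of total cost $p+q\le 4$ and then verifying that the resulting coloring is properly connected. Two elementary facts are used throughout. First, a single edge is trivially a properly colored path, so only \emph{non-adjacent} pairs of vertices need to be joined by a properly colored path. Second, since $\alpha(G)=3$, the set $\{x_1,x_2,x_3\}$ (and, in (iii), also $\{y_1,y_2,y_3\}$) is a \emph{maximum} independent set; hence every vertex of $G$ lying outside the prescribed structure is adjacent to some $x_i$, and if $u,v$ are two non-adjacent vertices outside the structure they cannot both have a single common $x_i$ as their only neighbour in $\{x_1,x_2,x_3\}$ (otherwise $\{x_j,x_k,u,v\}$ would be independent of size $4$), so one may always select distinct representatives $x_i\in N_G(u)$ and $x_{i'}\in N_G(v)$.

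Configurations (i) and (ii) contain no vertices outside the structure, since the cliques partition $V(G)$, so only cross-clique routing is at issue. In (i), connectivity makes the graph obtained by contracting $C_1,C_2,C_3$ connected, so I pick two inter-clique edges forming a spanning tree on the three cliques and recolor them with the new colors $2$ and $3$ (cost $2+2=4$); any cross-clique path then alternates color-$1$ steps inside cliques with the distinct new colors on the two connecting edges, hence is properly colored even when the two connecting edges meet at a vertex of the middle clique. In (ii) the matching $M$ of size three is itself a spanning tree of the contracted four-clique graph, and crucially its edges are pairwise vertex-disjoint; recoloring all three with the single new color $2$ (cost $3+1=4$) forces every cross-clique path to insert a genuine color-$1$ step inside each intermediate clique between successive matching edges (their endpoints in that clique are distinct), so the path alternates $1,2,1,2,\dots$ and is properly colored.

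For (iii) I recolor the three alternating edges $y_1x_2,\,y_2x_3,\,y_3x_1$ of the $6$-cycle with the new color $2$ (cost $3+1=4$), turning it into a properly $2$-colored $C_6$; pairs on the cycle are joined along the cycle. A vertex $u$ outside the cycle reaches the cycle by a color-$1$ edge to some $x_i$, and leaving $x_i$ along its incident color-$2$ edge traces a properly colored path visiting every cycle vertex, which connects $u$ to any cycle vertex. For two outside vertices $u,v$ the delicate point---and the reason both $\{x_i\}$ and $\{y_j\}$ are assumed independent---is a parity phenomenon in the $2$-colored $C_6$: an arc that both starts and ends on color $2$ has odd length and therefore runs between an $x$-vertex and a $y$-vertex. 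Choosing a neighbour $x_i$ of $u$ and a neighbour $y_{j'}$ of $v$ (these are automatically distinct), such an arc exists between $x_i$ and $y_{j'}$, and prefixing and suffixing the color-$1$ edges $ux_i$ and $y_{j'}v$ yields a properly colored $u$--$v$ path.

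Case (iv) is the main obstacle, as it requires handling the six sporadic skeletons $H_1$--$H_6$ individually. For each I would fix a recoloring of its edges of total cost at most $4$ engineered so that from each of $x_1,x_2,x_3$ and from the remaining skeleton vertices there is a properly colored path beginning with a non-color-$1$ edge; every vertex of $G$ outside the skeleton is then attached through a color-$1$ edge to some $x_i$ and routed exactly as in (iii), the maximum independent set $\{x_1,x_2,x_3\}$ guaranteeing (by the selection fact of the first paragraph) that two non-adjacent outside vertices can be entered at distinct, appropriately colored, attachment points. Verifying properly-connectedness thus reduces to a finite check on each $H_i$ together with this uniform routing of off-skeleton vertices; the genuinely case-dependent labour lies in producing the six explicit colorings and confirming their finitely many internal routings, which is where I expect the bulk of the work to be.
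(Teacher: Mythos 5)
Your treatments of (i), (ii) and (iii) are correct and essentially identical to the paper's: the same two-edge coloring for (i), the same one-color matching recoloring for (ii), and for (iii) the complementary alternating triple of the $6$-cycle (which changes nothing); your parity argument in (iii) is just a more explicit write-up of what the paper asserts. The problem is (iv), which is where almost all of the actual work of this lemma lies, and there your proposal is not merely unfinished bookkeeping---the plan as stated would fail. You claim that after suitably coloring each skeleton $H_i$, two non-adjacent outside vertices $u,v$ can be routed ``exactly as in (iii)'' through distinct attachment vertices $x_k\neq x_j$. But in (iv), unlike (iii), only $\{x_1,x_2,x_3\}$ is assumed independent, so an outside vertex is only guaranteed a color-$1$ edge into the $x$-side of the skeleton; connecting $u$ and $v$ then requires a properly colored $x_k$--$x_j$ path \emph{inside the skeleton} that both begins and ends with a non-color-$1$ edge. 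For $H_4$ (the path $x_1,y_1,x_2,y_2,x_3,y_3$ plus the edge $y_1y_2$, with $x_1y_1,x_2y_2,x_3y_3$ recolored) no such path from $x_2$ to $x_3$ exists: starting from $x_2$ with the color-$2$ edge $x_2y_2$, every continuation dead-ends before it can enter $x_3$ through its color-$2$ edge $x_3y_3$. So the ``finite internal check plus uniform routing'' scheme cannot succeed for all six skeletons, no matter how the at-most-$4$-cost coloring is chosen within the skeleton alone.

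What the paper does instead, and what is missing from your plan, is an argument that leaves the skeleton. It assumes $\pc(G)>4$, takes a bad pair $w,w'$, shows (for $H_4$, say) that neither can be adjacent to $x_1$, so that $\{x_1,w,w'\}$ is a \emph{new} maximum independent set of $G$; its maximality then forces edges of $G$ that are not edges of the skeleton (e.g.\ $y_3$ must be adjacent to one of $x_1,w,w'$), and in each resulting case a properly colored $w$--$w'$ path is built using those extra edges. Moreover, $H_5$ and $H_6$ are not handled directly at all: depending on which edges among $\{y_1,y_2,y_3\}$ are present in $G$, these cases are reduced to $H_1$, to $H_4$, or to condition (iii). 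These ideas---extracting new independent sets from the failure of proper connection, exploiting forced edges outside the skeleton, and the inter-configuration reductions---constitute the substance of the proof of (iv), and none of them appear in your proposal.
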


\begin{figure}[h!]
  \centering
  \includegraphics[width=16cm]{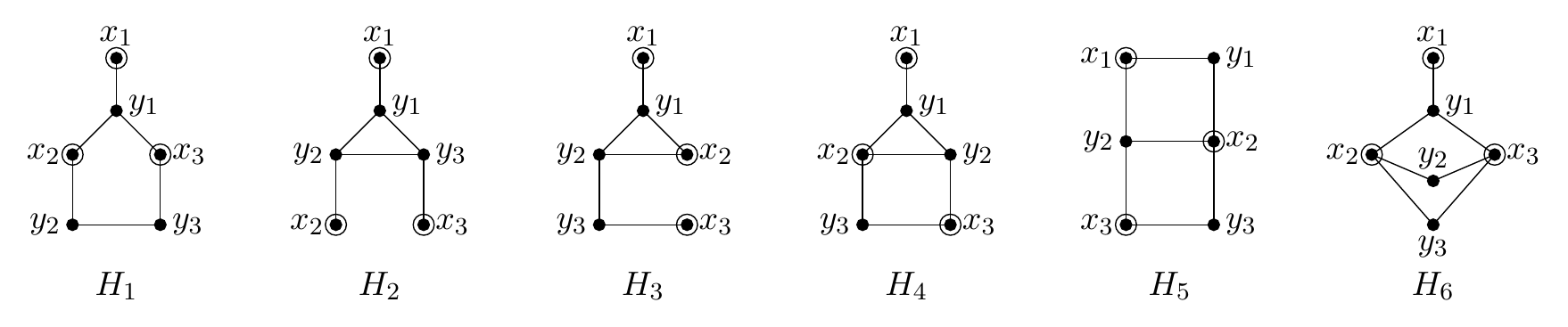}
  \caption{Subgraphs $H_1$-$H_6$ of $G$, where the circled vertices $x_1,x_2,x_3$ form an independent set of $G$}\label{fig:config}
\end{figure}

\begin{proof}
If (i) holds, that is, $V(G)$ is partitioned into three cliques $X_1$, $X_2$, $X_3$, then we may assume that there are two edges $e\in E_G(X_1,X_2)$ and $e'\in E_G(X_1,X_3)$, and coloring $e$ and $e'$ with new colors 2 and 3, respectively, gives a properly colored path between every two vertices of $G$. Thus in this case, $\pc(G)\le 2+2=4$.
If (ii) holds, then coloring the edges in $M$ by the new color 2 gives a properly colored path between every two vertices of $G$, and so $\pc(G)\le 1+3=4$.
If (iii) holds, then since  $\{x_1,x_2,x_3\}$  and $\{y_1,y_2,y_3\}$ are maximum independent sets in $G$,
every vertex in $V(G)\setminus \{x_1,x_2,x_3,y_1,y_2,y_3\}$ has a neighbor in $\{x_1,x_2,x_3\}$ and a neighbor in $\{y_1,y_2,y_3\}$, and so
coloring three edges $x_1y_1,x_2y_2,x_3y_3$ by the new color 2 gives a properly colored path between every two vertices of $G$, and so $\pc(G)\le 1+3=4$. Hence if (i), (ii), or (iii) holds, then $\pc(G)\le 4$.

Now suppose to the contrary that (iv) holds and $\pc(G)>4$.
Then $G$ has $H$ as a subgraph which is isomorphic to $H_i$ for some $i\in[6]$, where $I=\{x_1,x_2,x_3\}$ is an independent set of $G$. We label the vertices depicted in Figure~\ref{fig:config}.
We color three edges $x_1y_1$, $x_2y_2$, $x_3y_3$ by the new color $2$.
Since $\pc(G)>4$, there are two nonadjacent vertices $w$ and $w'$ of $G$ which can not be connected by a properly colored path. Since there is a properly colored path between any pair of two vertices in $H$, we may assume that $w\not\in V(H)$.

First, suppose that  $H$ is isomorphic to $H_i$ for some $i\in[3]$.
Since $wx_k\in E(G)$ for some $k\in[3]$ by the maximality of $|I|$, there is a path from $w$ to every vertex of $H$ starting with $w,x_k,y_k$.
Thus $w'\not\in V(H)$. Since $\alpha(G)=3$, we may assume that $wx_k,w'x_j\in E(G)$
for two distinct $k,j\in\{1,2,3\}$. Then we can find a properly colored path starting with $w,x_k,y_k$ and ending with $y_j,x_j,w'$. Consequently, there is a properly colored path connecting $w$ and $w'$, a contradiction.
Suppose that $H$ is isomorphic to $H_4$.
From two properly colored paths $x_1,y_1,x_2,y_2,x_3,y_3$ and $x_1,y_1,y_2,x_2$,
we know that $w'\not\in V(H)$ and  each of $w$ and $w'$ is not adjacent to $x_1$.
Thus $I'=\{x_1,w,w'\}$ is an independent set of $G$.
Without loss of generality, we may assume that $wx_2,w'x_3\in E(G)$.
Then $y_3$ has a neighbor in $I'$ by maximality of $|I'|$.
If $y_3w\in E(G)$, then $w,y_3,x_3,w'$ is a properly colored path between $w$ and $w'$.
If $y_3w'\in E(G)$, then $w,x_2,y_2,x_3,y_3,w'$ is a properly colored path between $w$ and $w'$.
If $y_3x_1\in E(G)$, then  $w,x_2,y_2,y_1,x_1,y_3,x_3,w'$ is a properly colored path between $w$ and $w'$.
Thus, we reach a contradiction.

Suppose that $H$ is isomorphic to $H_5$. By the condition (iii),
$\{y_1,y_2,y_3\}$ is not an independent set of $G$. If $y_1y_3\in E(G)$ then $G$ has  a subgraph isomorphic to $H_1$ with independent set $I$ ($x_1y_2$ of $H_5$ plays a role of $x_1y_1$ of $H_1$), and
if $y_1y_2\in E(G)$ or $y_2y_3\in E(G)$ then $G$ has a subgraph isomorphic to $H_4$ with independent set $I$, which implies that we can reach a same contradiction.

Suppose that $H$  is isomorphic to $H_6$.
If  $y_2y_3\in E(G)$ then $G$ has  a subgraph isomorphic to $H_1$  with independent set $I$, and
if $y_1y_2\in E(G)$ or $y_1y_3\in E(G)$ then $G$ has  a subgraph isomorphic to $H_4$ with independent set $I$, a contradiction.
Thus $\{y_1,y_2,y_3\}$ is an independent set of $G$.
From the fact that $w$ has a neighbor in $\{x_1,x_2,x_3\}$ and a neighbor in $\{y_1,y_2,y_3\}$, we can find a properly colored path from $w$ to every vertex of $V(H)$.
Thus, $w'\not\in V(H)$.

If $wx_1\in E(G)$, then using the path $w,x_1,y_1,x_2,y_2,x_3,y_3$ and edge $w'y_t$ for some $t\in[3]$, we can find a properly colored path between $w$ and $w'$, a contradiction.
Thus $wx_1\not\in E(G)$. Similarly, $w'x_1\not\in E(G)$.
Then $I'=\{w,w',x_1\}$ is an independent set of $G$.
We may assume that $wx_2,w'x_3\in E(G)$.
If $y_2x_1,y_3x_1\in E(G)$, then $y_2,x_1,y_3,x_3,y_1,x_2$ is a 6-cycle satisfying (iii), a contradiction.
Thus,  $y_2x_1\not\in E(G)$ or $y_3x_1\not\in E(G)$.
Since each of $y_2$ and $y_3$ has a neighbor in $I'$ by maximality of $I'$, we reach a contradiction by one of the following four observations: (1) If $y_2w\in E(G)$, then $w,y_2,x_2,y_3,x_3,w'$ is a properly colored path; (2) if $y_2w'\in E(G)$, then $w,x_2,y_2,w'$ is a properly colored path; (3) if $y_3w\in E(G)$, then $w,y_3,x_3,w'$ is a properly colored path;
(4) if $y_3w'\in E(G)$, then $w,x_2,y_2,x_3,y_3,w'$ is a properly colored path.
 \end{proof}

\begin{proof}[Proof of Theorem~\ref{thm:alpha:3}]
Let $G$ be  a connected graph with $\alpha(G)=3$. Suppose to the contrary that $\pc(G)>4$. Then $G$ does not satisfy any condition in Lemma~\ref{prop:config}.
By Lemma~\ref{lem:folk}, we can find one of $T_1$, $T_2$, $T_3$ of Figure~\ref{fig:cases} as an induced subgraph.
When $G$ has an induced copy of $T_i$ for some $i\in[3]$,
we always follow the label in Figure~\ref{fig:cases}.
For simplicity, we let $I=\{x_1,x_2,x_3\}$, which is an independent set of $G$, $Z=V(G)\setminus V(T_i)$, and $Z_k=\{w\in Z\mid N_G(w)\cap I=\{x_k\} \}$ for every $k\in[3]$.
Clearly $Z\neq \emptyset$ and  $Z'_k=Z_k\cup\{x_k\}$ is a clique for every $k\in[3]$.

\begin{figure}[h!]
  \centering
  \includegraphics[width=11cm]{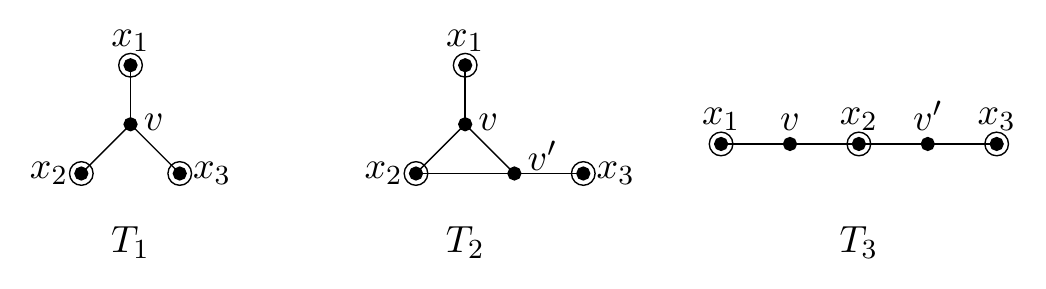}
  \caption{Induced subgraphs $T_1$-$T_3$ of $G$, where the circled vertices form an independent set of $G$}\label{fig:cases}
\end{figure}

\noindent\textbf{(Case 1)} $G$ contains $T_1$ or $T_2$ as an induced subgraph.

Suppose that $G$ contains an induced copy of $T_1$.
Since $\alpha(G)=3$, the vertex $v$ dominates $Z'_i$ for some $i\in [3]$.
Without loss of generality, we may assume that $v$ dominates $Z'_1$.
If $Z_1\cup Z_2\cup Z_3=Z$, then $V(G)$ is partitioned into three cliques $Z'_1\cup\{v\}$, $Z'_2$, $Z'_3$, which is the condition (i) of Lemma~\ref{prop:config},  a contradiction. Thus there is $w\in Z$ such that $wx_s,wx_t\in E(G)$ for some distinct $s,t\in [3]$.
Since $G$ has neither $H_5$ nor $H_6$ in Figure~\ref{fig:config} with independent set $I$,
we can conclude that $Z-(Z_1\cup Z_2\cup Z_3)=\{w\}$.
If $\{s,t\}=\{2,3\}$, then coloring $vx_2$ and $vx_3$ by the new color $2$ and $3$, respectively, gives a properly colored path between every two vertices of $G$, a contradiction.
Thus, by the symmetry of the roles of $\{1,2\}$ and $\{1,3\}$, we may assume that $\{s,t\}=\{1,2\}$.
If $Z_2=\emptyset$, then $V(G)$ can be partitioned into three cliques $Z'_1\cup\{v\}$, $\{z_2,w\}$, $Z'_3$, a contradiction.
Thus $Z_2\neq\emptyset$. We take $w'\in Z_2$.
Then the four sets $Z'_1\cup\{v\}$, $\{w,x_2\}$, $Z_2$, and $Z'_3$ form a  clique partition of $V(G)$, and the edges $x_1w$, $x_2w'$, $x_3v$,  form a matching satisfying the condition (ii) of Lemma~\ref{prop:config}, a contradiction.

Suppose that $G$ does not contain an induced copy of $T_1$. Then $G$ contains $T_2$ as an induced subgraph.
Suppose that $x_2$ has a neighbor $w\in Z$.
If $wx_1,wx_3\not\in E(G)$, then $G$ has $H_2$ in Figure~\ref{fig:config} with independent set $I$, a contradiction. Thus we may assume $wx_3\in E(G)$. Then this asserts that $G$ has $H_4$ as a subgraph with independent set $I$, a contradiction.
If $x_2$ has no neighbor in $Z$, then coloring $vx_1$ and $v'x_3$ by the new color 2 gives a properly colored path between every two vertices of $G$, a contradiction.

\medskip

\noindent\textbf{(Case 2)} $G$ contains neither $T_1$ nor $T_2$ as an induced subgraph.

Then $G$ contains $T_3$ as induced subgraph.
If there is a common neighbor $w$ of $x_1$ and $x_3$, then by the condition (iii) of Lemma~\ref{prop:config}, we may assume that $vw\in E(G)$. Then it is a contradiction since either $\{x_1,w,x_2,x_3\}$ induces $T_1$ or $\{x_1,v,w,x_2,x_3\}$ induces $T_2$. Thus there is no common neighbor of $x_1$ and $x_3$. This implies that $V(G)$ can be partitioned into three sets $Z'_1$, $Z'_3$ and $N_G[x_2]$. Clearly, $N_G[x_2]$ is not a clique since $vv'\not\in E(G)$.

\begin{claim}\label{claim:clqiue}
$N_G(x_2)$ can be partitioned into two cliques.
\end{claim}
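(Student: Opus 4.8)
The plan is to produce the two cliques explicitly by sorting $N_G(x_2)$ according to how its vertices see $x_1$ and $x_3$. Because in Case~2 there is no common neighbour of $x_1$ and $x_3$, no vertex of $N_G(x_2)$ is adjacent to both, so I would write
\[ N_G(x_2)=A_1\cup A_3\cup A_0,\qquad A_1=N_G(x_1)\cap N_G(x_2),\quad A_3=N_G(x_3)\cap N_G(x_2), \]
with $A_0$ the neighbours of $x_2$ adjacent to neither $x_1$ nor $x_3$ (the two non-adjacent vertices $v,v'$ of $T_3$ both belong to $N_G(x_2)$, since $vv'\notin E(G)$ already witnesses that $N_G[x_2]$ is not a clique). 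The target is to show that $A_1$, $A_3$, $A_0$ are each cliques and that $A_0$ splits as $A_0=A_0^{(1)}\cup A_0^{(3)}$ with $A_0^{(1)}$ complete to $A_1$ and $A_0^{(3)}$ complete to $A_3$; then $A_1\cup A_0^{(1)}$ and $A_3\cup A_0^{(3)}$ are the two desired cliques.

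First I would dispose of the easy ingredient: $A_0$ is a clique, since a non-adjacent pair $u,u'\in A_0$ would give the independent set $\{u,u',x_1,x_3\}$ of size $4$ (both $u,u'$ miss $x_1$ and $x_3$, and $x_1x_3\notin E(G)$), contradicting $\alpha(G)=3$. The same size-$4$ trick shows that no independent triple can lie inside a single one of the three blocks, so every independent triple of $N_G(x_2)$ would have to meet all three blocks.

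The substantial work is to kill the remaining non-edges by feeding each offending configuration into Lemma~\ref{prop:config}. To see $A_1$ is a clique (and symmetrically $A_3$): a non-adjacent pair $a,a'\in A_1$, taken together with $x_1,x_2$ and a vertex $c\in A_3$ (which exists unless $A_3=\emptyset$, a degenerate case treated separately), forms a small subgraph with designated independent set $\{x_1,x_2,x_3\}$; according to whether $c$ is adjacent to $a,a'$ this is either an independent set exceeding $\alpha(G)$ or a copy of one of $H_1$--$H_6$, a contradiction. For the absorption of $A_0$, a vertex $u\in A_0$ that misses some $a\in A_1$ and some $c\in A_3$ yields the fresh independent triple $\{x_1,u,x_3\}$ and, together with $a,c,x_2$, a copy of $T_1$ or $T_2$ or of some $H_i$ whose designated triple is $\{x_1,u,x_3\}$ or $\{x_1,x_2,x_3\}$, contradicting the Case~2 hypotheses; hence each $A_0$-vertex is complete to $A_1$ or to $A_3$ and can be assigned accordingly.

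The hard part will be the \emph{global} consistency rather than these local checks. Once the three blocks are shown to be cliques, the complement of $G[N_G(x_2)]$ is tripartite and triangle-free, but it could a priori still contain an induced $C_5$, i.e.\ an antihole inside $N_G(x_2)$; such a $C_5$ has no independent triple and so slips past the size-$4$ argument, yet it obstructs a partition into two cliques. Eliminating it is exactly where the absence of induced $T_1,T_2$ and the full list $H_1$--$H_6$ together with condition~(iii) of Lemma~\ref{prop:config} must be used: I would show that any such $C_5$, placed alongside $x_1$ or $x_3$, reproduces one of these forbidden subgraphs. Finally I would clear the degenerate cases in which one of $A_1,A_3,A_0$ is empty, where $N_G(x_2)$ is visibly a union of at most two cliques.
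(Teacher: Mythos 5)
You take a genuinely different decomposition from the paper: you split $N_G(x_2)$ by adjacency to $x_1/x_3$, whereas the paper splits $N=N_G(x_2)\setminus\{v,v'\}$ by adjacency to $v$ and $v'$. There, the absence of an induced $T_1$ gives $N=A\cup A'$; the absence of $H_4$ and $H_3$ pins the trace of each $w\in A$ down to exactly $\{x_1,v,x_2\}$; and induced-$T_1$-freeness applied once more makes $A\cup\{v\}$ and $A'\cup\{v'\}$ cliques. In your language, the paper proves that $A_0=\emptyset$ and that $A_1=A\cup\{v\}$, $A_3=A'\cup\{v'\}$ are the two cliques. Your route could be made to work, but its central step fails as written. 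For ``$A_1$ is a clique'' you claim that nonadjacent $a,a'\in A_1$, together with $x_1,x_2,x_3$ and a witness $c\in A_3$, give ``either an independent set exceeding $\alpha(G)$ or a copy of one of $H_1$--$H_6$.'' In the sub-case $ca,ca'\notin E(G)$ neither alternative holds. Indeed, all fifteen pairs among $\{x_1,x_2,x_3,a,a',c\}$ are then determined (note $ax_3,a'x_3,cx_1\notin E(G)$ because $x_1,x_3$ have no common neighbour in Case 2), and the induced graph is exactly the $4$-cycle $x_1ax_2a'$ plus the path $x_2cx_3$: its independence number is $3$, and it contains no $H_i$ ($H_4$ needs the edge $y_1y_2$ between two matched partners, while this graph has no edge inside either triple; $H_5$ needs a spanning $6$-cycle, impossible since $x_3$ has degree $1$; $H_6$ needs a degree-$3$ vertex on each side of the matching; and $H_1,H_2,H_3$ must support the paper's ``enter at any $x_k$ through $y_k$ and reach everything'' argument, which fails here since from $x_1$ one cannot properly reach $c$ or $x_3$). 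What actually kills this sub-case is a tool your dichotomy omits: $\{x_2,a,a',c\}$ is an induced $K_{1,3}$, i.e.\ an induced $T_1$, contradicting the Case 2 hypothesis. Without invoking induced-$T_1$-freeness --- the engine of the paper's proof --- the step is simply false. (In the remaining sub-cases a copy of $H_4$ does exist, but only via an embedding that permutes the roles of $x_1$ and $x_3$, which you assert rather than exhibit.)

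The second problem is your closing paragraph, which defers the self-declared ``hard part'' (excluding an induced $C_5$ inside $N_G(x_2)$) to a promise. That worry is actually vacuous: if $A_1,A_3,A_0$ are cliques and every $u\in A_0$ is complete to $A_1$ or to $A_3$ --- precisely the local statements you set out to prove --- then $A_1\cup A_0^{(1)}$ and $A_3\cup A_0^{(3)}$ are two cliques partitioning $N_G(x_2)$, and no induced $C_5$ can survive (an $A_0$-vertex of such a $C_5$ would miss a vertex of $A_1$ and a vertex of $A_3$, violating absorption). So no global step is needed; the real work is local, and that is exactly where your write-up is incomplete. For what it is worth, your $A_0$-absorption step can be completed along the lines you indicate (the dichotomy on the unknown pair $ac$ yields an induced $T_1$ on $\{x_2,a,u,c\}$ or an induced $T_2$ on $\{x_1,a,x_2,c,x_3\}$), and your size-$4$ argument for $A_0$ being a clique is correct; but the clique step for $A_1$ and $A_3$ needs the induced-$T_1$ hypothesis you never use there, at which point one is essentially re-deriving the paper's proof in different notation.
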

\begin{proof}
For simplicity, let $N=N_G(x_2)\setminus \{v,v'\}$.
If $N=\emptyset$, then it is clear.
Suppose that $N \neq \emptyset$.
Let $A=\{w\in N  \mid wv\in E(G)\}$ and  $A'=\{w\in N \mid wv'\in E(G)\}$.
Since $G$ has no induced copy of $T_1$, $N =A\cup A'$.

Suppose that $w\in A$. If $wx_3\in E(G)$, then we have $H_4$ with independent set $I$, a contradiction.
Thus $wx_3\not\in E(G)$.
If $wx_1\not\in E(G)$, then we have $H_3$ with independent set $\{w,x_1,x_3\}$, a contradiction.
Thus $wx_1\in E(G)$.
Then not to have $H_4$ again, $wv'\not\in E(G)$. Hence, for every vertex $w\in A$, we have $N_G(w)\cap V(T_3)=\{x_1,v,x_2\}$.

For  $w,w'\in A$ with $w\neq w'$, if $ww'\not\in E(G)$ then $\{w,w',v',x_2\}$ induces $T_1$, a contradiction. Hence, $A\cup\{v\}$ is a clique. Similarly we can show that $A'\cup\{v'\}$ is a clique. This implies the claim.
\end{proof}

Suppose that there are $z_1\in Z_1$ and $z_3\in Z_3$ such that  $vz_1\not\in E(G)$ and $v'z_3\not\in E(G)$.
Since $\{z_1,z_3,v,v'\}$ is not an independent set of $G$, we may assume that $vz_3\in E(G)$.
Since $\{z_3,x_1,x_2\}$  is an independent set of $G$ by the definition of $Z_3$,
$\{x_1,x_2,v,z_3\}$ induces $T_1$, which is a contradiction to the case assumption.
Hence, without loss of generality, we assume that $v$ dominates $Z_1$.
Let $\{A,A'\}$ be a clique bipartition of $N_G(x_2)$ as in Claim~\ref{claim:clqiue}, say $v\in A$ and $v'\in A'$.
Then the four sets $X_1=Z'_1\cup\{v\}$, $X_2=A\cup\{x_2\}-\{v\}$, $X_3=A'$, $X_4=Z'_3$ form a clique partition of $V(G)$.
If $|X_2|=1$, that is $X_2=\{x_2\}$, then $X_1, X_2\cup X_3, X_4$ form a clique partition of $V(G)$, a contradiction in view of the condition (i) of Lemma~\ref{prop:config}. Thus $|X_2|\ge 2$, and we take a vertex $a\in X_2\setminus\{x_2\}$.

Suppose that $|X_3|=1$, that is $X_3=\{v'\}$.
If $Z_3=\emptyset$, then $X_1$, $X_2$, $\{v',x_3\}$ form a clique partition of $V(G)$, a contradiction in view of the condition (i) of Lemma~\ref{prop:config}.
Thus $Z_3\neq \emptyset$. We take a vertex $b\in Z_3$.
Then $X_1,X_2, \{v',x_3\}, Z_3$ form a clique partition of $V(G)$ and $\{va,x_2v',x_3b\}$ is a matching satisfying the condition (ii) of Lemma~\ref{prop:config}, a contradiction.
Thus $|X_3|\ge 2$, and so we can take $a'\in X_3\setminus\{v'\}$.
Then together with the four cliques $X_1,X_2,X_3,X_4$, $\{ va, x_2a', v'x_3\}$  is a matching satisfying the condition (ii) of Lemma~\ref{prop:config}, a contradiction.
\end{proof}

 \end{document}